\newcolumntype{T}[1]{S[table-format=#1]}
\newcolumntype{U}[1]{S[table-format=#1,
                       round-mode=places, 
                       round-precision=2]}
\newcommand{\real}{\mathbb{R}}
\newcommand{\I}{\mathbb 1}
\newcolumntype{?}{!{\vrule width 0.7pt}}
\newcommand{\plus}{\scalebox{0.75}[1.0]{$+$}}
\newcommand{\minus}{\scalebox{0.75}[1.0]{$-$}}
\renewcommand{\Re}{\ensuremath{\operatorname{Re}}}
\renewcommand{\Im}{\ensuremath{\operatorname{Im}}}
\newcommand{\normal}{\color{black}}
\numberwithin{equation}{section}
\theoremstyle{plain}
\newtheorem{theorem}{Theorem}
\newtheorem{lemma}[theorem]{Lemma}
\newtheorem{corollary}[theorem]{Corollary}
\theoremstyle{definition}
\newtheorem{remark}[theorem]{Remark}
\newtheorem{definition}[theorem]{Definition}
\newtheorem{example}[theorem]{Example}
\numberwithin{theorem}{section}
\sodef\myspace{}{.2em}{1em plus1em}{2em plus.1em minus.1em}
\newcommand{\rli}[3]{\sideset{_{\,#1}^{\mathrm{R}}}{_{#2}^{#3}}{\mathop{\mathrm{I}}}}  
\newcommand{\rld}[3]{\sideset{_{\,#1}^{\mathrm{R}}}{_{#2}^{#3}}{\mathop{\mathrm{D}}}}
\newcommand{\mad}[3]{\sideset{_{#1}^{\mathrm{M}}}{_{#2}^{#3}}{\mathop{\mathrm{D}}}}
\newcommand{\cd}[3]{\sideset{_{\,#1}^{\mathrm{C}}}{_{#2}^{#3}}{\mathop{\mathrm{D}}}} 
\newcommand{\ced}[3]{\sideset{_{~\,#1}^{\mathrm{Ce}}}{_{#2}^{#3}}{\mathop{\mathrm{D}}}} 
\newcommand{\ed}[3]{\sideset{_{\,#1}^{\mathrm{E}}}{_{#2}^{#3}}{\mathop{\mathrm{D}}}} 
\g@addto@macro{\endabstract}{\@setabstract}
\newcommand{\authorfootnotes}{\renewcommand\thefootnote{\@fnsymbol\c@footnote}}%
\begin{document}
\title[The mapping properties in weighted fractional Sobolev space]{\bf{ The mapping properties of fractional derivatives in weighted fractional Sobolev space}}
\maketitle
\begin{center}
\authorfootnotes
  Cailing Li
  \textsuperscript{} 
  \footnote{cailingli.math@gmail.com \\  \quad \quad This paper is part of Cailing Li's PhD thesis \cite{2023_Li} which was written at Technische Universität Dresden 
 under the supervision of Ren\'e L.Schilling. The author is grateful to David Berger for his invaluable discussion.} 

  \textsuperscript{}Institute of Mathematical Stochastics, Technische Universität Dresden, D-01217 Dresden, Germany \par \bigskip
\end{center}
 

\section*{Abstract}
We study the mapping behavior of the Marchaud fractional derivative with different extensions in the scale of fractional weighted Sobolev spaces. In particular we show that the $\alpha$--order Riemann--Liouville fractional derivative maps $W^{p,s}_0(\Omega)$ to $W^{p,s-\alpha}(\Omega)$, for all $0<\alpha<s<1$ and the $\alpha$--order Marchaud fractional derivative with even extension maps the fractional Sobolev space  $W^{p,s}((0,\infty))$ to $W^{p,s-\alpha}(\real)$ for all $0<\alpha<s<1$ and $ps\geq1$ . The proof is based on the Calder\'{o}n--Lions interpolation theorem.
\section{Introduction}
Fractional derivatives have recently become important tools to model real world phenomena. There are important applications in physics, chemistry and biology 
as well as in mathematics. A good introduction to applications is given in the monograph Klages el al \cite{2008_Klages}.

One of the most frequently used extensions of the usual derivative is the
\textbf{Riemann--Liouville fractional derivative} of order $\alpha\in (0,1)$ on $[0,x]$ 
    \begin{align*}
      \rld{0}{x}{\alpha} \phi = \frac 1{\Gamma(1-\alpha)} \frac d{dx} \int_0^x \frac{\phi(s) }{(x-s)^{\alpha}}\,ds, \quad \alpha\in (0,1).
    \end{align*} 
It is defined in such a way that it “fills the gaps” between the zero-order “derivative” (i.e.  $\phi = \left(\frac d{dx}\right)^0\phi$) and the usual derivative (i.e.  $\phi'=\left(\frac d{dx}\right)^1\phi$).  Note that  $\rld{0}{x}{\alpha}$ is a non local integral operator  if $\alpha \notin \mathbb{N}_0$. 
  
 \textbf{Fractional derivative in the sense of Caputo} is given by
$$
     \cd{0}{x}{\alpha} \phi = \frac 1{\Gamma(1-\alpha)} \frac d{dx}\int_0^x \frac{\phi(s)-\phi(0)}{(x-s)^\alpha}\,ds,
    \quad x\geq 0,\;\alpha\in(0,1).
$$

The Caputo--type fractional derivative is particularly useful when dealing with initial value problems, as it takes into account the initial condition(s) of the function being differentiated. It is widely used in fractional calculus and has applications in various fields, including engineering, and finance, we refer the reader to Meerschaert et al \cite{2009_Meerschaert},  Baeumer  and  Meerschaert \cite{2001_Baeumer} and Eidelman et al \cite{2004_Eidelman}. 

It is important to note that there are differences between the Riemann--Liouville fractional derivative and the Caputo--type fractional derivative. The main difference lies in their treatment of the initial conditions. The Caputo--type derivative considers the initial conditions of the function, while the Riemann--Liouville derivative does not.   The relationship between these two derivatives is a subject of study in fractional calculus. There is a substantial literature demonstrating the connection and equivalence between Riemann--Liouville and Caputo--type fractional derivatives. 

These papers provide conditions such that these two derivatives are equal or related to each other. Details and proofs can be found in the relevant literature on fractional calculus, we refer the reader to Samko et al \cite{1993_Samko}, Kochubei et al \cite{2019_Kochubei}.

In the case of a smooth function that vanishes outside of $(0,\infty)$, it has been shown in Du et.al \cite{2021_Du} that the Riemann--Liouville fractional derivative is equal to the Caputo derivative. This equivalence is particularly significant as it corresponds to the generator of a killed subordinator.  This leads us to consider the fractional derivative as  the generator of a Markov process. The \textbf{Marchaud fractional derivative} is given by
\begin{align}
\label{mar-def-001}
    \mad{}{+}{\alpha} \phi(x)&=\frac{\alpha}{\Gamma(1-\alpha)}\int_{0+}^\infty \frac{\phi(x)-\phi(x-s)}{s^{1+\alpha}}\,ds, \quad x\in \real, \quad \phi: \real\to \real.
\end{align}
 We will use the extension expressed Riemann--Liouville and Caputo fractional derivatives in Section 2.  

The Marchaud derivatives are \emph{a priorily} defined on the whole real axis  that extends the concept of differentiation to non-integer orders. It was introduced by Pierre Marchaud in the early 20th century and has found applications in various areas of mathematics and physics, we refer  the reader  to Samko et al \cite{1993_Samko}. The Marchaud derivative differs from the Riemann--Liouville and Caputo derivatives   since it does not explicitly use derivatives. It has some distinct properties and behavior, which make it suitable for certain applications, especially in probability,   see  Hernández-Hernández and Kolokoltsov  \cite{ 2015_HernandezHernandez} and Kolokoltsov \cite{2019_Kolokoltsov}.

It is a well--known result that the differential operator and integral operator have a reciprocal relation when the latter is applied first. In other words, $\frac{d}{dx}\int_0^x \phi(t)\,dt = \phi(x)$. The Riemann--Liouville fractional derivative allows us to generalize the notion of differentiation to non-integer orders. It has various applications in applied mathematics, particularly in the study of systems with memory and non-local behavior, see Kilbas et al \cite{2006_Kilbas}, Diethelm \cite{2010_Diethelm}. Similarly, one might hope that the composition of the Riemann--Liouville fractional derivative and integral operators satisfy the same relation.

As the right inverse of the Riemann--Liouville fractional derivative, the \textbf{Riemann--Liouville  fractional integral} of order $\alpha\in (0,1)$ defined  on $[0,x]$ is given by
    \begin{align*}
        \rli{0}{x}{\alpha}\phi= \int_0^x \phi(t) \frac{(x-t)^{\alpha-1}}{\Gamma(\alpha)}\,dt, \quad x\in [0,\infty). 
    \end{align*}
 With the development of functional analysis, these basic facts have led to numerous discussions about the behavior of fractional integrals and derivatives in various function spaces.  
The mapping properties of various forms of fractional integration operators in known function spaces is of great interest see \cite{2016_Rafeiro, 2017_Bergounioux, 2020_Carbotti}. Fractional integration shares some smoothness with classical integrals, increases smoothness of functions in certain ways. On the other hand, fractional differentiation often decrease smoothness the properties of functions. When we discuss mapping properties, we refer to the following question: given a specific function space $\mathcal{X}$,  can we characterize  $\rli{0}{\cdot}{\alpha} \mathcal{X}=\{\rli{0}{\cdot}{\alpha}\phi, \phi\in \mathcal{X}\}$ as a subspace of another function space $\mathcal{Y}$.

Hardy and Littlewood made significant contributions to the study of mapping properties for the Riemann-Liouville fractional integral $\rli{0}{\cdot}{\alpha}$. Their results primarily focus on the integrability properties of functions and the continuity properties of functions.
Such statements  are known as \textbf{Hardy--Littlewood theorems}. We refer the reader to \cite[P.66, P.103 ]{1993_Samko}. 

Rafeiro and Samko \cite{2016_Rafeiro} investigated the behavior of Riemann--Liouville fractional integrals and derivatives in various function spaces, such as H\"older, Lebesgue, and Morrey spaces. However, their work did not explore these behaviors specifically in Sobolev space. 


Carbotti et al. \cite{2020_Carbotti} showed that the $\alpha$-order fractional derivative maps the Sobolev space $W^{p,1}_0$  (see P.\pageref{sob-0} for the definition) to the fractional Sobolev-Slobodeckij space $W^{p,\alpha}$ for $\alpha\in (0,\min\{1/p,(p-1)/2p\})$, which implies that $\alpha<1/2$. Understanding from differential operator and integral operator, it is commonly expected that applying an $\alpha$-order fractional integral to a function may increase its regularity by $\alpha$, whereas an $\alpha$-order fractional derivative might decrease the regularity  by $\alpha$.  However, in the work of Carbotti et al., applying an $\alpha$-order derivative results in a loss of 1-$\alpha$ regularity. Note that $\alpha<1/2$ and $1-\alpha>\alpha$ that means loss of regularity too large. 

 In this paper,  we extend the results appeared in \cite{2020_Carbotti} and we show in this paper Corollary \ref{rl-3-8} that $\alpha$ order Riemann--Liouville fractional derivative maps $W^{p,s}_0$ to $W^{p,s-\alpha}$, for all $0<\alpha<s<1$ and $\alpha$ order Marchaud fractional derivative with even extension maps weight fractional Sobolev space  $W^{p,s}((0,\infty))$ to $W^{p,s-\alpha}(\real)$ for all $0<\alpha<s<1$ and $ps\geq1$ in Corollary \ref{rl-3-9}. using Calder\'{o}n Lions Interpolation Theorem.  

\section{Basic definition and notation}
\subsection{Fractional derivatives}
In this section, we will provide some tools to investigate classical integrals and derivatives, starting with the general form of the Cauchy formula: 
%
%
%
%

\begin{equation}\label{cau-for-1}
\phi^{(-n)}(t) = \frac{1}{\Gamma(n)} \int_a^t (t-s)^{n-1} \phi(s) \, ds.
\end{equation}

If we replace the integer $n$ in the Cauchy formula \eqref{cau-for-1} with a real number $\alpha$, we obtain an integral of arbitrary order. This leads us to the definition of the Riemann-Liouville fractional integral, denoted by $\rli{0}{\cdot}{\alpha}$, which is discussed in detail in works by Podlubny and Samko et al. \cite{1998_Podlubny, 1993_Samko}.

\begin{definition}[Riemann--Liouville fractional integral]\label{d:rli}
    The \textbf{Riemann--Liouville  fractional integral} of order $\alpha\in (0,1)$ defined  on $[0,x]$ 
    is given by
    \begin{align*}
        \rli{0}{x}{\alpha}\phi &= \int_0^x \phi(t) \frac{(x-t)^{\alpha-1}}{\Gamma(\alpha)}\,dt,\quad x\in [0,\infty). 
    \end{align*}
\end{definition}

Next we will give several definition of fractional derivative.  In this paper we restrict ourselves to $\alpha \in (0,1)$ since this is the only case we are interested in. 

To define the Riemann--Liouville fractional derivative, let $\phi$ be a function defined on $(a, b)$.
 \begin{definition}\label{rl-fra-def}
    \textbf{The Riemann--Liouville fractional derivative} of order $\alpha\in (0,1)$ on $[0,x]$ 
    is defined by 
    \begin{align*}
        \rld{0}{x}{\alpha}\phi &= \frac 1{\Gamma(1-\alpha)} \frac d{dx} \int_0^x \frac{\phi(t) }{(x-t)^{\alpha}}\,dt=\frac d{dx}\rli{0}{x}{1-\alpha}\phi. 
    \end{align*}
    Next, we will introduce the Caputo--type fractional derivative. The Caputo fractional derivative is another approach to generalize the concept of differentiation to non--integer orders. We also provide the definition of censored fractional derivative, which is studied in \cite{2021_Du}. 

\begin{definition}\label{cap-2-1-3}
Let $\phi:[0,\infty)\to \real$ be differentiable and $\alpha\in (0,1)$. The
\textbf{Caputo fractional derivative} of order $\alpha\in (0,1)$ is defined as
    \begin{equation}\label{fra-cap}
        \cd{0}{x}{\alpha}\phi = \frac 1{\Gamma(1-\alpha)} \int_0^x \frac{\phi'(t)}{(x-t)^\alpha}\,dt=\rld{0}{x}{\alpha}\phi-\phi(0)\frac{x^{-\alpha}}{\Gamma(1-\alpha)}.
    \end{equation}
\end{definition}
\begin{definition}\label{cen-fra-216}   Let $\phi: [0,\infty)\to \real$ be a function, then the \textbf{censored fractional derivative} of order $\alpha$, $\alpha\in (0, 1)$, is defined as
 \begin{equation}
\begin{aligned}
 \ced{0}{x}{\alpha}\phi(x)
=\rld{0}{x}{\alpha}\phi-\phi(x)\frac{x^{-\alpha}}{\Gamma(1-\alpha)}.
\end{aligned}
 \end{equation}
\end{definition}
    \end{definition}
    We will now see how we can treat the various fractional derivatives in a unified way. Let us introduce the definition of Marchuaud fractional derivative.

\begin{definition}[Marchaud fractional derivative]\label{mar-def} Let $u:\real\to\real$ be a function which is defined on the whole real axis and $\alpha\in (0, 1)$. The \textbf{Marchaud fractional derivatives} are the following operators
\begin{align}
\label{mar-def+}
    \mad{}{+}{\alpha}u(x)
    &:= \frac{\alpha}{\Gamma(1-\alpha)}\int_{0+}^\infty \frac{u(x)-u(x-t)}{t^{1+\alpha}}\,dt,
%
\end{align}
\end{definition}
Let us introduce the following important definition of extension and restriction of an operator, which taken from \cite[P.142]{2012_Kato}. 
\begin{definition}\label{extension operator}
Let $A, B: X\to Y$ be two operators such that $\mathcal{D}(A)\subset\mathcal{D}(B)$ and $A u=Bu$ for all $u\in \mathcal{D}(A)$, $B$ is called an extension of $A$ denoted as $ext_{\mathcal{D}(B)} A=B$ and $A$ a restriction of $B$ denoted as $res_{\mathcal{D}(A)} B=A$.


\end{definition}

Let us now investigate Marchaud derivatives for functions which are only defined on the half-axis $[0,\infty)$. When comparing Marchaud derivatives with fractional derivatives on $[0,\infty)$, we have to extend functions defined on the half-axis onto $(-\infty,0]$.
\begin{definition}[Extension]\label{mar-ext}
    Let $\phi : [0,\infty)\to\real$. Then we can extend $\phi$ in the following way to become a function on $\real$
    \begin{itemize}
    \item `Killing type' extension: $\phi^0(x)=\phi(x)\I_{[0,\infty)}(x) + 0 \cdot \I_{(-\infty,0)}(x)$.
    \item `Sticky type' extension: $\phi^\sigma(x) := \phi(x)\I_{[0,\infty)}(x) + \phi(0)\I_{(-\infty,0)}(x)$.
    \item `Even' extension: $\phi^e (x) := \phi(x)\I_{[0,\infty)}(x) + \phi(-x)\I_{(-\infty,0)}(x)$.
    \end{itemize}
    \end{definition}
    \begin{remark}
        If we take $X=[0, \infty), E=\real$, we have $ext_\real^0 \phi=\phi^0$ and $ext_\real^\sigma \phi=\phi^\sigma$ are two extension operator.
    \end{remark}
    \begin{definition} Let $\alpha\in(0,1)$ and $\phi :[0,\infty)\to \real$ be a function. The even extension of the Marchaud fractional derivative is given by  
    \begin{align*}\mad{}{+}{\alpha}\phi^e(x)
        &= \frac{\alpha}{\Gamma(1-\alpha)}\int_0^\infty \frac{\phi^e(x)-\phi^e(x-s)}{s^{\alpha+1}}\,ds
        = \ed{0}{x}{\alpha}\phi,\quad x\in \real.
        \end{align*}
        \end{definition}
    The connection between Riemann--Liouville, Caputo and Marchaud derivatives is  as follows:
\begin{lemma}\label{mar-rl-frac}
    Let $\alpha\in(0,1)$ and $\phi :[0,\infty)\to \real$ be a function.
    \begin{align*}
        \rld{0}{x}{\alpha}\phi
        &= \frac{\alpha}{\Gamma(1-\alpha)}\int_0^\infty \frac{\phi^0(x)-\phi^0(x-s)}{s^{\alpha+1}}\,ds
        = \mad{}{+}{\alpha}\phi^0(x),\\
        \cd{0}{x}{\alpha}\phi
        &= \frac{\alpha}{\Gamma(1-\alpha)}\int_0^\infty \frac{\phi^\sigma(x)-\phi^\sigma(x-s)}{s^{\alpha+1}}\,ds
        = \mad{}{+}{\alpha}\phi^\sigma(x)
    \end{align*}
    is valid for any extension of $\phi$ \textup{(}as we do not need to extend $\phi$!\textup{)}.
\end{lemma}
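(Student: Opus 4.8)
The plan is to prove the two displayed identities directly; the final clause (``valid for any extension'') is then immediate, since the right-hand sides are assembled only from the values of $\phi$ on $[0,\infty)$ through the specific extensions $\phi^0,\phi^\sigma$, so no further choice enters. Also, in each line the second equality is nothing but the definition of $\mad{}{+}{\alpha}$ applied to the relevant extension, so the real work is the first equality. I will work under the standing assumption that $\phi$ is regular enough for all integrals below to converge absolutely — for instance $\phi\in C^1([0,\infty))$ with locally bounded derivative, which in particular makes $\cd{0}{x}{\alpha}\phi$ well defined — and I fix $x>0$, both sides vanishing at $x=0$.

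I would start with the Caputo identity. First I would rewrite $\cd{0}{x}{\alpha}\phi$ using Definition~\ref{cap-2-1-3} and substitute $t=x-s$ to get $\frac1{\Gamma(1-\alpha)}\int_0^x\phi'(x-s)\,s^{-\alpha}\,ds$. On the Marchaud side I would split $\int_0^\infty=\int_0^x+\int_x^\infty$ for $\phi^\sigma$: for $s>x$ the argument $x-s$ is negative, so $\phi^\sigma(x-s)=\phi(0)$ and the tail is the elementary integral $(\phi(x)-\phi(0))\int_x^\infty s^{-1-\alpha}\,ds=\tfrac1\alpha(\phi(x)-\phi(0))x^{-\alpha}$. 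For the piece on $(\epsilon,x)$ I would integrate by parts in $s$, using $\frac{d}{ds}\big(-\tfrac1\alpha s^{-\alpha}\big)=s^{-1-\alpha}$ and $\frac{d}{ds}\big(\phi(x)-\phi(x-s)\big)=\phi'(x-s)$; this yields the endpoint term $-\tfrac1\alpha(\phi(x)-\phi(0))x^{-\alpha}$ at $s=x$, a term $\tfrac1\alpha(\phi(x)-\phi(x-\epsilon))\epsilon^{-\alpha}$ at $s=\epsilon$, and the remaining integral $\tfrac1\alpha\int_\epsilon^x\phi'(x-s)s^{-\alpha}\,ds$. Letting $\epsilon\downarrow0$, the $s=\epsilon$ term vanishes (it is $O(\epsilon^{1-\alpha})$ since $\phi$ is differentiable at $x$ and $\alpha<1$), and after adding back the tail the two $\pm\tfrac1\alpha(\phi(x)-\phi(0))x^{-\alpha}$ contributions cancel, leaving $\tfrac1\alpha\int_0^x\phi'(x-s)s^{-\alpha}\,ds$; undoing the substitution and multiplying by $\alpha/\Gamma(1-\alpha)$ then gives exactly $\cd{0}{x}{\alpha}\phi$.

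For the Riemann--Liouville identity I would avoid redoing the computation and instead subtract the two extensions: since $\phi^0$ and $\phi^\sigma$ coincide on $[0,\infty)$ and differ by $\phi(0)$ on $(-\infty,0)$, i.e. for $s>x$,
\begin{align*}
\mad{}{+}{\alpha}\phi^0(x)-\mad{}{+}{\alpha}\phi^\sigma(x)
&=\frac{\alpha}{\Gamma(1-\alpha)}\int_0^\infty\frac{\phi^\sigma(x-s)-\phi^0(x-s)}{s^{1+\alpha}}\,ds\\
&=\frac{\alpha}{\Gamma(1-\alpha)}\int_x^\infty\frac{\phi(0)}{s^{1+\alpha}}\,ds
=\phi(0)\,\frac{x^{-\alpha}}{\Gamma(1-\alpha)}.
\end{align*}
Combining this with the Caputo identity and the relation $\rld{0}{x}{\alpha}\phi=\cd{0}{x}{\alpha}\phi+\phi(0)\,x^{-\alpha}/\Gamma(1-\alpha)$ recorded in \eqref{fra-cap} yields $\mad{}{+}{\alpha}\phi^0(x)=\rld{0}{x}{\alpha}\phi$. (Alternatively I could repeat the first computation verbatim with $\phi^0$ in place of $\phi^\sigma$: the only change is that the tail integral becomes $\phi(x)x^{-\alpha}/\alpha$, and the bookkeeping then leaves $\cd{0}{x}{\alpha}\phi+\phi(0)x^{-\alpha}/\Gamma(1-\alpha)$.)

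I expect the main obstacle to be purely technical: justifying the limit $\epsilon\downarrow0$ in the integration by parts, i.e. showing $(\phi(x)-\phi(x-\epsilon))\epsilon^{-\alpha}\to0$ — this is where differentiability (or H\"older continuity of order $>\alpha$) of $\phi$ at $x$ is really used — together with ensuring absolute convergence of $\int_0^x\phi'(x-s)s^{-\alpha}\,ds$ and of the Marchaud integral both near $s=0$ and near $s=\infty$; these convergence requirements are exactly what dictate the smoothness and growth hypotheses one must place on $\phi$. Everything else reduces to the substitution $t=x-s$ and the elementary identity $\int_c^\infty s^{-1-\alpha}\,ds=c^{-\alpha}/\alpha$.
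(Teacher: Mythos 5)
Your proof is correct, and the regularity caveats you flag (absolute convergence, the $O(\epsilon^{1-\alpha})$ boundary term at $s=\epsilon$) are exactly what the paper hides behind ``assume $\phi$ is as regular as we need''. The organization, however, differs from the paper's: the paper treats the killing extension first, writing $\phi(x)-\phi(x-s)=\int_0^s\phi'(x-t)\,dt$ and interchanging the order of integration (Fubini) to arrive at $\int_0^x t^{-\alpha}\phi'(x-t)\,dt$ plus the boundary contributions, identifies the result as $\rld{0}{x}{\alpha}\phi$ via \eqref{fra-cap}, and then gets the Caputo case by the purely notational substitution of $\phi(0)$ for $0$ throughout the same calculation. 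You instead do the sticky/Caputo case first by integration by parts on $(\epsilon,x)$ with an explicit limit $\epsilon\downarrow0$, and then transfer to the Riemann--Liouville case by observing that $\phi^0$ and $\phi^\sigma$ differ only on $(-\infty,0)$, so the two Marchaud derivatives differ by exactly $\phi(0)x^{-\alpha}/\Gamma(1-\alpha)$, which is precisely the Riemann--Liouville/Caputo gap recorded in \eqref{fra-cap}. The integration-by-parts step and the paper's FTC-plus-Fubini step are two faces of the same computation, so nothing essentially new is needed there; what your version buys is that the difference-of-extensions identity replaces the paper's ``redo the calculation with $-\phi(0)$ in place of $-0$'' shortcut by a one-line argument and makes the role of the extension completely transparent. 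Note that your use of the second equality in \eqref{fra-cap} is legitimate here: the paper's own proof also invokes Definition \ref{cap-2-1-3} in its last step, so you are relying on exactly the same previously stated relation and no circularity is introduced.
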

\normal
\begin{proof}
    We begin with the integral expression in the middle of the first displayed formula. As usual, we assume that $\phi$ is as regular as we need it for the calculation below. Using the definition of the killing-type extension we see
    \begin{align*}
        \frac{\alpha}{\Gamma(1-\alpha)}&\int_0^\infty \frac{\phi^0(x)-\phi^0(x-s)}{s^{\alpha+1}}\,ds\\
        &= \frac{\alpha}{\Gamma(1-\alpha)}\left\{ \int_0^x \frac{\phi(x)-\phi(x-s)}{s^{\alpha+1}}\,ds + \int_x^\infty \frac{\phi(x)-0}{s^{\alpha+1}}\,ds \right\}\\
        &= \frac{1}{\Gamma(1-\alpha)}\left\{ \alpha \int_0^x\int_0^s \phi'(x-t)\,dt \frac{ds}{s^{\alpha+1}}+ (\phi(x)-0) x^{-\alpha} \right\}\\
        &= \frac{1}{\Gamma(1-\alpha)}\left\{ \int_0^x\int_t^x \frac{\alpha \, ds}{s^{\alpha+1}}\,\phi'(x-t)\,dt + (\phi(x)-0) x^{-\alpha} \right\}
\\ &= \frac{1}{\Gamma(1-\alpha)}\left\{ \int_0^x t^{-\alpha}\,\phi'(x-t)\,dt - x^{-\alpha}\int_0^x \phi'(x-t)\,dt + (\phi(x)-0) x^{-\alpha} \right\}\\
        &= \frac{1}{\Gamma(1-\alpha)}\left\{ \int_0^x \frac{\phi'(x-t)}{t^\alpha}\,dt - \big(\phi(x)-\phi(0)\big)x^{-\alpha} + (\phi(x)-0) x^{-\alpha}\right\}\\
        &= \frac{1}{\Gamma(1-\alpha)}\left\{ \int_0^x \frac{\phi'(x-t)}{t^\alpha}\,dt + \frac{\phi(0)-0}{x^\alpha}\right\}\\
        &= \rld{0}{x}{\alpha}\phi
    \end{align*}
    where we use Definition \ref{cap-2-1-3} in the last equality.

    Note that the artificial "$-0$" should remind of the fact that we use the killing extension $\phi^0$. If we use the sticky extension $\phi^0 \rightsquigarrow \phi^\sigma$ and if we change throughout the above calculation $-0\rightsquigarrow -\phi(0)$, then we get the second claimed formula for the Caputo derivative.

      The first calculation of the present proof shows, in particular,
      $$
        \frac{\alpha}{\Gamma(1-\alpha)} \int_0^x \frac{\phi(x)-\phi(x-s)}{s^{\alpha+1}}\,ds
        =
        \frac{1}{\Gamma(1-\alpha)} \left\{ \int_0^x \frac{\phi'(x-t)}{t^\alpha}\,dt - \frac{\phi(x)-\phi(0)}{x^\alpha}\right\}.
      $$
Since the reference measure $ds$ has no atom at $s=0$, there is no problem when considering $\phi'$ resp.\ $(\phi^0)'$ at $t=0$.
\end{proof}
The first part of the proof gives, in fact, the following alternative form of the Riemann--Liouville, Caputo, Marchaud fractional derivative with even extension and censored derivatives.
\begin{corollary}\label{cap-rld-con}
    Let $\alpha\in (0,1)$. Then
    \begin{align}
        \rld{0}{x}{\alpha}\phi
        &= \frac{\alpha}{\Gamma(1-\alpha)} \int_0^x \frac{\phi(x)-\phi(x-s)}{s^{\alpha+1}}\,ds + \frac{\phi(x)}{\Gamma(1-\alpha)x^\alpha};\label{mar-rli-kill}\\
        \cd{0}{x}{\alpha}\phi
        &= \frac{\alpha}{\Gamma(1-\alpha)} \int_0^x \frac{\phi(x)-\phi(x-s)}{s^{\alpha+1}}\,ds + \frac{\phi(x)-\phi(0)}{\Gamma(1-\alpha)x^\alpha};
        \\ \ed{0}{x}{\alpha}\phi
        &= \frac{\alpha}{\Gamma(1-\alpha)} \int_0^x \frac{\phi(x)-\phi(x-s)}{s^{\alpha+1}}\,ds+ \frac{\alpha}{\Gamma(1-\alpha)}\int_x^\infty \frac{\phi(x)-\phi(s-x)}{s^{\alpha+1}}\,ds ;
        \\ 
        \ced{0}{x}{\alpha}\phi
         &= \frac{\alpha}{\Gamma(1-\alpha)} \int_0^x \frac{\phi(x)-\phi(x-s)}{s^{\alpha+1}}\,ds.
    \end{align}
\end{corollary}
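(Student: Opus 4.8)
The plan is to read off all four identities from the splitting of the Marchaud integral already carried out in the proof of Lemma \ref{mar-rl-frac}. Writing $\psi$ for whichever of the extensions $\phi^0$, $\phi^\sigma$, $\phi^e$ from Definition \ref{mar-ext} is under consideration, one decomposes
\begin{equation*}
\frac{\alpha}{\Gamma(1-\alpha)}\int_0^\infty \frac{\psi(x)-\psi(x-s)}{s^{\alpha+1}}\,ds
= \frac{\alpha}{\Gamma(1-\alpha)}\int_0^x \frac{\phi(x)-\phi(x-s)}{s^{\alpha+1}}\,ds
+ \frac{\alpha}{\Gamma(1-\alpha)}\int_x^\infty \frac{\phi(x)-\psi(x-s)}{s^{\alpha+1}}\,ds ,
\end{equation*}
using that $\psi(x)=\phi(x)$ and that $\psi(x-s)=\phi(x-s)$ whenever $0<s<x$. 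The first summand on the right is common to all four formulas, so everything reduces to evaluating the tail integral over $(x,\infty)$, where the argument $x-s$ is negative and the value of $\psi$ is dictated by the chosen extension.

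For the Riemann--Liouville case $\psi=\phi^0$, so $\psi(x-s)=0$ on $(x,\infty)$ and, since $\int_x^\infty s^{-\alpha-1}\,ds=x^{-\alpha}/\alpha$, the tail equals $\phi(x)/(\Gamma(1-\alpha)x^\alpha)$; as the left-hand side is $\rld{0}{x}{\alpha}\phi$ by Lemma \ref{mar-rl-frac}, this is \eqref{mar-rli-kill}. For the Caputo case $\psi=\phi^\sigma$, so $\psi(x-s)=\phi(0)$ on $(x,\infty)$ and the same computation gives the tail $(\phi(x)-\phi(0))/(\Gamma(1-\alpha)x^\alpha)$, while Lemma \ref{mar-rl-frac} identifies the left-hand side with $\cd{0}{x}{\alpha}\phi$. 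For the even extension $\psi=\phi^e$ one has $\psi(x-s)=\phi(-(x-s))=\phi(s-x)$ on $(x,\infty)$, which turns the tail into $\frac{\alpha}{\Gamma(1-\alpha)}\int_x^\infty \frac{\phi(x)-\phi(s-x)}{s^{\alpha+1}}\,ds$; since the left-hand side equals $\ed{0}{x}{\alpha}\phi$ by definition, this is the third identity. Finally, the censored formula follows by subtracting $\phi(x)x^{-\alpha}/\Gamma(1-\alpha)$ from \eqref{mar-rli-kill}: this is precisely the boundary term that was added there, so the two cancel and only the integral over $(0,x)$ survives, which is the defining relation of $\ced{0}{x}{\alpha}$ in Definition \ref{cen-fra-216}.

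Since each step is an elementary rearrangement of integrals already appearing in the proof of Lemma \ref{mar-rl-frac}, there is no genuine difficulty. The only points that deserve a word of care are the identification $\phi^e(x-s)=\phi(s-x)$ for $s>x$ in the even case, and the convergence of the tail integrals --- immediate for the Riemann--Liouville and Caputo cases, and requiring (for the even case) only that $\phi$ be bounded near the origin so that $\int_x^\infty |\phi(x)-\phi(s-x)|s^{-\alpha-1}\,ds<\infty$. Throughout, $\phi$ is assumed as regular as is needed for these manipulations, exactly as in Lemma \ref{mar-rl-frac}.
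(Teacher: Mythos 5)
Your proposal is correct and follows essentially the same route as the paper: the corollary is obtained there precisely by the splitting of the Marchaud integral at $s=x$ carried out in the first part of the proof of Lemma \ref{mar-rl-frac}, with the tail over $(x,\infty)$ evaluated according to the killing, sticky, and even extensions, and the censored formula read off from Definition \ref{cen-fra-216}. Your explicit evaluation $\int_x^\infty s^{-\alpha-1}\,ds=x^{-\alpha}/\alpha$ and the remark on $\phi^e(x-s)=\phi(s-x)$ simply spell out what the paper leaves implicit.
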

We note that there are other natural possibilities, e.g.\ odd extensions etc.
Now, let us consider a non--trivial example.
\begin{example}
For a function $u:[0.\infty)\to \mathbb{R}$, the so--called censored derivative, defined  in Definition \ref{cen-fra-216} and Corollary \ref{cap-rld-con}, is 
\begin{align*}
\frac{\alpha}{\Gamma(1-\alpha)}\int_{0+}^{x}\frac{u(x)-u(x-t)}{t^{1+\alpha}}\,dt. 
\end{align*} 
If we compare it with the Marchaud derivative, we need an extension

\begin{equation}
\tilde{u}(x)= \left\{
\begin{aligned}
 & u(x),\quad x>0 ,\\
& u(0)=v(0), \quad x=0,
\\ & v(x), \quad x\leq 0.
\end{aligned}
\right.
\end{equation}
such that
\begin{align*}
\frac{\alpha}{\Gamma(1-\alpha)}\int_x^\infty \frac{\tilde{u}(x)-\tilde{u}(x-t)}{t^{1+\alpha}}\,dt=\frac{\alpha}{\Gamma(1-\alpha)}\int_x^\infty \frac{u(x)-v(x-t)}{t^{1+\alpha}}\,dt=0.
\end{align*}

By using  the change of variables $-y=x-t$, we obtain  $$ \frac{u(x)}{\alpha x^\alpha}= \int_0^\infty \frac{v(y)}{y^\alpha} \left(\frac1{(\frac{x}{y}+1)^{1+\alpha}}\right)\,\frac{dy}{y}.$$
By multiplying both sides with $\alpha x^\alpha$, we get   a Mellin convolution (defined by $\circledast$)
\begin{equation}\label{2-1-11}
u(x)=\alpha \int_0^\infty v(y) \left(\frac{(\frac{x}{y})^\alpha}{(\frac{x}{y}+1)^{1+\alpha}}\right)\,\frac{dy}{y}=:\alpha v\circledast g(x), 
\end{equation}
where $g(r):=r^\alpha(1+r)^{-\alpha-1}$.  

 Recall that the \textbf{Mellin transform} of a function $\phi:[0,\infty)\to\real$ is defined by $$\mathscr{M}(\phi; s)=\int_0^\infty x^{s-1}\phi(x)\,dx, \quad s\in \mathbb{C}.$$ If $x^{c-1} \phi(x)\in L^1(0,\infty)$ for all  $c\in (a,b)$ and some $a, b\in \real$, then $\mathscr{M}(\phi; s)$ exists and it is analytic in the strip $a<\Re s<b$. If $\phi$ is continuous and $t\mapsto\mathscr{M}(\phi; c+i t)$ is integrable in $(-\infty, \infty)$, then we have an inverse transform,
$$\phi(x)=\frac1{2i\pi}\int_{c-i\infty}^{c+i\infty}x^{-z}\mathscr{M}(\phi; z)\,dz.$$
Taking the Mellin transform on both sides of eq. \eqref{2-1-11} we have 
$$\mathscr{M}u(z)=\alpha \mathscr{M}v(z)\mathscr{M}g(z),$$
for all $z\in \mathbb{C}$, where the Mellin transforms of $v$ and $g$ are well-defined.
Calculating the Mellin transform of $g$, we obtain that
\begin{align*}
\mathscr{M}g(z)=B(1-z,\alpha+z)\textrm{ for all }-\alpha<\operatorname{Re}(z)<1.
\end{align*}
Formally, we obtain that $v$ is given by

$$v(x)=\mathscr{M}^{-1}\left(\frac{\mathscr{M}u}{\alpha \mathscr{M}g}\right)(x)=\mathscr{M}^{-1}\left(\frac{\mathscr{M}u}{\alpha B(1-\cdot,\alpha+\cdot)}\right)(x).$$
If we assume for some $x\in (-\alpha,1)$,   that   $$y\mapsto \frac{\mathscr{M}u(x+i y)}{\alpha B(1-x-i y,\alpha+x+i y)}  \in L^1(\real),$$  then the above inverse Mellin transform is well defined i.e. 
\begin{equation}
v(x)=\frac1{2\pi i}\int_{c-i\infty}^{c+i\infty} x^{-z} \frac{\mathscr{M}u(z)}{\alpha B(1-z,\alpha+z)}\,dz<\infty
\end{equation}
with $-\alpha<c<1$.  This gives us an idea of how to extend the Marchaud fractional derivative to get  other fractional derivatives. 
\end{example}

\subsection{Interpolation theorem and fractional Sobolev space}

In this section we will introduce an important interpolation theorem which we will use in the later.  For further information see Triebel \cite{1985_Triebel, 1994_Triebel} , Bennett $\&$  Sharpeley
\cite{1988_Bennett} and Reed $\&$ Simon \cite{1980_Reed}. 
\begin{definition} Let $\mathcal{X}$ be a complex vector space. Two norms $\|\cdot\|_{\mathcal{X}_0}, \|\cdot\|_{\mathcal{X}_1}$ on $\mathcal{X}$ are called \textbf{consistent} if any sequence $\{x_n\}$ that converges to zero in one norm, and which is Cauchy in other norm, converges to zero in both norms. If  $\|\cdot\|_{\mathcal{X}_0}$  and $\|\cdot\|_{\mathcal{X}_1}$ are consistent, we define  $\|x\|_{\mathcal{X}_0+ \mathcal{X}_1}=\inf\{\|x_0\|_{\mathcal{X}_0}+\|x_1\|_{\mathcal{X}_1}:x=x_0+x_1, x_j\in \mathcal{X}, j=0,1\}$
\end{definition}
 We denote by $\mathcal{X}_0$, $\mathcal{X}_1$ and $\mathcal{X}_0+\mathcal{X}_1$ the completion of $\mathcal{X}$ with respect to the norms $\|\cdot\|_{\mathcal{X}_0}$, $\|\cdot\|_{\mathcal{X}_1}$ and $\|\cdot\|_{\mathcal{X}_0+ \mathcal{X}_1}$, respectively.

Assume $S=\{z\in \mathbb{C}: 0< \Re z<1\}$ be an open strip in the complex plane.   Assume $\|\cdot\|_{\mathcal{X}_0}$ and $\|\cdot\|_{\mathcal{X}_1}$ be two consistent norms on a complex vector space $\mathcal{X}$.   Denote $F[\mathcal {X}]$ to be the space of continuous functions $\phi(z)$, from $\overline{S}$ to $\mathcal{X}_0+ \mathcal{X}_1$ which are analytic in $S$, with the following properties:
\begin{enumerate}[(1)]
\item $\sup_{z\in \overline{S}}\|\phi(z)\|_{\mathcal{X}_0+\mathcal{X}_1}<\infty.$
\item $\phi(iy)\in \mathcal{X}_0$ and $\phi(1+iy)\in \mathcal{X}_1$ with $y\in \real$, are continuous with respect to the norm $\|\cdot\|_{ \mathcal{X}_0}$ and $\|\cdot\|_{\mathcal{X}_1}$, respectively:  $$\|\phi\|_{F[\mathcal {X}]}=\sup_{y\in \real }\{\|\phi(iy)\|_{\mathcal{X}_0}, \quad \|\phi(1+iy)\|_{\mathcal{X}_1}\}<\infty.$$
\end{enumerate}
$F[\mathcal {X}]$ with the norm $\|\cdot\|_{F[\mathcal {X}]}$ is a Banach space, and for each $t\in [0,1]$ the subspace $K_t=\{\phi\in F[\mathcal {X}]\,|\,\phi(t)=0\}$ is $\|\cdot\|_{F[\mathcal {X}]}$-closed. 
Define $\mathcal {\tilde{X}}_t=F[\mathcal {X}]/K_t, 0\leq t\leq 1$, and the quotient norm on $\mathcal {\tilde{X}}_t$ is $\|\cdot\|_{\mathcal {\tilde{X}}_t}$. Note that $\mathcal {X}$ may be identified with a subset of $\mathcal {\tilde{X}}_t$ which takes each $x\in\mathcal {X}$ into $[x]$, the equivalence class of the constant function whose value is $x$. Further, $\mathcal {\tilde{X}}_t$ may be identified with a subset of $\mathcal{X}_0+ \mathcal{X}_1$ under the map which takes an equivalence class $[\phi]$ into the common value of its members at $t$. This map  is clearly injective, and the following computation shows that it is continuous: Let $[\phi] \in\mathcal {\tilde{X}}_t $, $x=\phi(t)$ and $ \|x\|_{\mathcal{X}_0+ \mathcal{X}_1}\leq \|\phi\|_{F[\mathcal {X}]}$. Thus $$\|x\|_{\mathcal{X}_0+ \mathcal{X}_1}\leq \|[\phi]\|_{\mathcal {\tilde{X}}_t}=\inf\{\|\phi\|_{F[\mathcal {X}]}: \phi\in F[\mathcal {X}], \, \phi(t)=x\}.$$We denote by $\mathcal {X}_t$ is the completion of $\mathcal {X}$ with respect to the norm $\|\cdot\|_{\mathcal {\tilde{X}}_t}$.  As usual, we set $$\|x\|_{\mathcal {X}_t}=\|[\phi]\|_{\mathcal {\tilde{X}}_t},$$ where $\phi\in F[\mathcal {X}]$  such that $\phi(t)=x$,  see Reed $\&$ Simon \cite{1975_Reed}.

The next theorem is  the Calder\'{o}n Lions Interpolation Theorem, which is  taken from Reed $\&$ Simon \cite{1975_Reed}.  
\begin{theorem}\label{clit-11} Let $\mathcal{X}$ and $\mathcal{Y}$ be complex vector spaces with given consistent norms\linebreak $\|\cdot\|_{\mathcal{X}_0}, \|\cdot\|_{\mathcal{X}_1}$ on $\mathcal{X}$ and $\|\cdot\|_{\mathcal{Y}_0}, \|\cdot\|_{\mathcal{Y}_1}$ on $\mathcal{Y}$. Suppose $\Lambda(\cdot)$ is an analytic, uniformly bounded, continuous and  $L(\mathcal{X}_0+ \mathcal{X}_1,  \mathcal{Y}_0+ \mathcal{Y}_1)$-valued function on the strip $\overline{S}$ with the following properties:
\begin{enumerate}[(1)]
\item $\Lambda(t) :\mathcal{X} \to \mathcal{Y}$ for each $t\in (0,1)$,
\item for all $y\in \real$, $\Lambda(iy):\mathcal{X}_0\to \mathcal{Y}_0$ and $M_0=\sup_{y\in \real}\|\Lambda(iy)\|_{L(\mathcal{X}_0, \mathcal{Y}_0)}<\infty$,
\item for all $y\in \real$, $\Lambda(1+iy):\mathcal{X}_1\to \mathcal{Y}_1$ and $M_1=\sup_{y\in \real}\|\Lambda(1+iy)\|_{L(\mathcal{X}_1,  \mathcal{Y}_1)}<\infty$,
\end{enumerate}
then $T(t)[\mathcal {X}_t]\subset \mathcal {Y}_t$ and 
$\|T(t)\|_{L(\mathcal {X}_t, \mathcal {Y}_t)}\leq M_0^{1-t}M_1^t$.
\end{theorem}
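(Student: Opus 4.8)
\emph{Proof proposal.} The plan is to run the classical three-lines argument of the complex (Calder\'on) interpolation method, inserting a Gaussian regularizing factor to compensate for the absence of decay at infinity. Recall first that, by the construction of $\mathcal{X}_t$ and $\mathcal{Y}_t$, one has $\|y\|_{\mathcal{X}_t}=\inf\{\|\phi\|_{F[\mathcal{X}]}:\phi\in F[\mathcal{X}],\ \phi(t)=y\}$ and likewise with $\mathcal{Y}$ in place of $\mathcal{X}$. Since $\mathcal{X}$ is dense in $\mathcal{X}_t$ and $\mathcal{Y}_t$ is complete, it suffices to prove the bound $\|\Lambda(t)x\|_{\mathcal{Y}_t}\le M_0^{1-t}M_1^{t}\,\|x\|_{\mathcal{X}_t}$ for every $x$ in the dense subspace $\mathcal{X}$ (note $\Lambda(t)x\in\mathcal{Y}\subset\mathcal{Y}_0+\mathcal{Y}_1$ by property (1), so the left-hand side is meaningful); the operator $\Lambda(t)$ then extends uniquely to $T(t)\in L(\mathcal{X}_t,\mathcal{Y}_t)$ with $\|T(t)\|_{L(\mathcal{X}_t,\mathcal{Y}_t)}\le M_0^{1-t}M_1^{t}$, consistently with the $L(\mathcal{X}_0+\mathcal{X}_1,\mathcal{Y}_0+\mathcal{Y}_1)$-action of $\Lambda(t)$. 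We may also assume $M_0>0$ and $M_1>0$, the general statement following by replacing $M_j$ with $M_j+\varepsilon$ and letting $\varepsilon\downarrow0$ at the very end.

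Fix $x\in\mathcal{X}$ and $\varepsilon>0$, and choose $\phi\in F[\mathcal{X}]$ with $\phi(t)=x$ and $\|\phi\|_{F[\mathcal{X}]}\le\|x\|_{\mathcal{X}_t}+\varepsilon$. For $\delta>0$ define the $\mathcal{Y}_0+\mathcal{Y}_1$-valued function on $\overline{S}$
\[
\psi(z):=M_0^{1-t}M_1^{t}\,e^{\delta(z^2-t^2)}\,M_0^{\,z-1}M_1^{\,-z}\,\Lambda(z)\phi(z).
\]
Then $\psi(t)=\Lambda(t)x$, since the scalar prefactor equals $1$ at $z=t$. The role of the factor $M_0^{\,z-1}M_1^{\,-z}$ is that $|M_0^{\,z-1}M_1^{\,-z}|=M_0^{-1}$ on $\Re z=0$ and $=M_1^{-1}$ on $\Re z=1$, while $|e^{\delta(z^2-t^2)}|=e^{\delta(\Re(z^2)-t^2)}$ with $\Re(z^2)-t^2=(\Re z)^2-(\Im z)^2-t^2\le(\Re z)^2\le1$ on $\overline{S}$, so $|e^{\delta(z^2-t^2)}|\le e^{\delta}$ on $\overline S$ and $\le1$ on $\Re z=0$. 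Using property (2), for all $y\in\real$,
\[
\|\psi(iy)\|_{\mathcal{Y}_0}\le M_0^{1-t}M_1^{t}\,M_0^{-1}\,\|\Lambda(iy)\phi(iy)\|_{\mathcal{Y}_0}\le M_0^{1-t}M_1^{t}\,M_0^{-1}\,M_0\,\|\phi(iy)\|_{\mathcal{X}_0}\le M_0^{1-t}M_1^{t}\,\|\phi\|_{F[\mathcal{X}]},
\]
and, using property (3) together with $|e^{\delta(z^2-t^2)}|\le e^{\delta}$ on $\Re z=1$,
\[
\|\psi(1+iy)\|_{\mathcal{Y}_1}\le e^{\delta}\,M_0^{1-t}M_1^{t}\,\|\phi\|_{F[\mathcal{X}]}.
\]
Hence $\|\psi\|_{F[\mathcal{Y}]}\le e^{\delta}\,M_0^{1-t}M_1^{t}\,(\|x\|_{\mathcal{X}_t}+\varepsilon)$.

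It remains to verify that $\psi\in F[\mathcal{Y}]$, which is the delicate point and, I expect, the main obstacle. Analyticity of $z\mapsto\Lambda(z)\phi(z)$ in $S$ and continuity on $\overline{S}$ into $\mathcal{Y}_0+\mathcal{Y}_1$ follow from the product rule for the composition of an analytic $L(\mathcal{X}_0+\mathcal{X}_1,\mathcal{Y}_0+\mathcal{Y}_1)$-valued function with an analytic $\mathcal{X}_0+\mathcal{X}_1$-valued function, and multiplication by the entire scalar factor preserves these; boundedness of $\psi$ in the $\mathcal{Y}_0+\mathcal{Y}_1$-norm on $\overline{S}$ (in fact decay as $|\Im z|\to\infty$) follows from the uniform boundedness of $\Lambda$, the boundedness of $\phi$, the boundedness of $z\mapsto M_0^{\,z-1}M_1^{\,-z}$ on $\overline{S}$, and the Gaussian factor. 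The genuinely subtle requirement is that $\psi(iy)\in\mathcal{Y}_0$ depend $\mathcal{Y}_0$-continuously on $y$ (and $\psi(1+iy)\in\mathcal{Y}_1$, $\mathcal{Y}_1$-continuously); to handle this one reduces, by a density argument in $F[\mathcal{X}]$, to $\phi$ of the elementary form $\phi(z)=\sum_{j=1}^{N}h_j(z)\,x_j$ with $x_j\in\mathcal{X}$ and $h_j$ bounded scalar functions analytic on $S$ and continuous on $\overline{S}$, for which $\Lambda(iy)\phi(iy)=\sum_j h_j(iy)\,\Lambda(iy)x_j\in\mathcal{Y}_0$ and the $\mathcal{Y}_0$-continuity in $y$ is immediate from property (2) and the continuity of $\Lambda$ on $\overline{S}$; the general case follows by approximation.

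Granting admissibility, $\psi$ is a competitor for $\Lambda(t)x$ in the definition of $\|\cdot\|_{\mathcal{Y}_t}$, so $\|\Lambda(t)x\|_{\mathcal{Y}_t}\le\|\psi\|_{F[\mathcal{Y}]}\le e^{\delta}M_0^{1-t}M_1^{t}(\|x\|_{\mathcal{X}_t}+\varepsilon)$. Letting $\delta\downarrow0$ and then $\varepsilon\downarrow0$ gives $\|\Lambda(t)x\|_{\mathcal{Y}_t}\le M_0^{1-t}M_1^{t}\|x\|_{\mathcal{X}_t}$ for all $x\in\mathcal{X}$; extending by density and completeness yields $T(t)[\mathcal{X}_t]\subset\mathcal{Y}_t$ with $\|T(t)\|_{L(\mathcal{X}_t,\mathcal{Y}_t)}\le M_0^{1-t}M_1^{t}$, and the perturbation $M_j\rightsquigarrow M_j+\varepsilon$ finally removes the positivity assumption. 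As indicated, the three-lines estimate itself is routine once the Gaussian regulator is in place; the work lies in checking that the regularized product $\psi$ genuinely belongs to $F[\mathcal{Y}]$.
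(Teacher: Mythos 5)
The paper itself does not prove Theorem \ref{clit-11}; it is quoted from Reed and Simon, so the only meaningful comparison is with the classical proof in that reference. Your argument is exactly that classical argument: for $x$ in the dense subspace $\mathcal{X}$ pick a near-optimal $\phi\in F[\mathcal{X}]$ with $\phi(t)=x$, multiply $\Lambda(z)\phi(z)$ by the scalar factor $M_0^{1-t}M_1^{t}M_0^{z-1}M_1^{-z}=M_0^{z-t}M_1^{t-z}$, estimate the two boundary lines using hypotheses (2) and (3), use the quotient-norm description of $\|\cdot\|_{\mathcal{Y}_t}$, and extend by density. The Gaussian factor $e^{\delta(z^2-t^2)}$ is harmless but superfluous: $\|\cdot\|_{F[\mathcal{Y}]}$ is just a supremum over the two boundary lines (no three-lines theorem is actually applied to $\psi$), and boundedness of $\psi$ on $\overline{S}$ in $\mathcal{Y}_0+\mathcal{Y}_1$ already follows from the uniform boundedness of $\Lambda$ and of $\phi$.

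The step that is not right as written is precisely the admissibility check you flag as the delicate point. For $\psi\in F[\mathcal{Y}]$ you need $y\mapsto\psi(iy)$ continuous in the $\mathcal{Y}_0$-norm and $y\mapsto\psi(1+iy)$ continuous in the $\mathcal{Y}_1$-norm. Your reduction to elementary $\phi(z)=\sum_j h_j(z)x_j$ is structurally sound (if such functions are $F[\mathcal{X}]$-dense, then $\psi_n\to\psi$ uniformly on each boundary line in the respective norm, so boundary continuity passes to the limit), but both links need repair. First, for elementary data the continuity is not ``immediate from property (2) and the continuity of $\Lambda$ on $\overline{S}$'': hypothesis (2) gives only the uniform bound $\sup_y\|\Lambda(iy)\|_{L(\mathcal{X}_0,\mathcal{Y}_0)}\le M_0$, while continuity of $\Lambda$ is assumed only as an $L(\mathcal{X}_0+\mathcal{X}_1,\mathcal{Y}_0+\mathcal{Y}_1)$-valued map; hence $y\mapsto\Lambda(iy)x_j$ is continuous only into $\mathcal{Y}_0+\mathcal{Y}_1$, and $\mathcal{Y}_0$-boundedness together with continuity in the weaker sum norm does not yield $\mathcal{Y}_0$-continuity. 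To close this you must either read (or strengthen) the hypotheses so that $y\mapsto\Lambda(iy)$ is strongly continuous into $L(\mathcal{X}_0,\mathcal{Y}_0)$ and $y\mapsto\Lambda(1+iy)$ into $L(\mathcal{X}_1,\mathcal{Y}_1)$ --- which is what the standard verification tacitly uses --- or supply that continuity by a separate argument. Second, the density of elementary functions (with coefficients in $\mathcal{X}_0\cap\mathcal{X}_1$, not merely in $\mathcal{X}$) is a genuine lemma of Calder\'on and should be cited or proved, not just invoked. Apart from this admissibility point, which the cited source also treats very lightly, your boundary estimates, the limits $\delta\downarrow0$ and $\varepsilon\downarrow0$, and the density extension to $T(t)$ are correct.
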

To illustrate the above interpolation theorem, we will provide an important example as application of Theorem \ref{clit-11}. 
\begin{example}\label{exa-12}
Let $\Omega\subseteq{\real}$, i.e. $\Omega$ can take $\real$. If we take $\mathcal{X}=L^1(\Omega)\cap L^\infty(\Omega)$  and set $\|\cdot\|_{\mathcal{X}_0}:=\|\cdot\|_{L^1(\Omega)}$ and $\|\cdot\|_{\mathcal{X}_1}:=\|\cdot\|_{L^\infty(\Omega)}$, we obtain that $\mathcal{X}_t=L^{t^{-1}}(\Omega)$ for $t\in (0,1)$. For a detailed discussion see Reed and Simon \cite[P.38]{1975_Reed}.
\end{example}

  We will introduce the definition of fractional weighted Sobolev space $W^{p,\beta}(\Omega, w)$ and  fractional Sobolev space $W^{p,\beta}(\Omega)$, where $1<p<\infty$ and $0<\beta\leq 1$. For further properties, we refer to Leoni \cite{2023_Leoni}.
  \begin{definition}Let $\Omega\subseteq{\real}$, i.e. $\Omega$ can take $\real$ and $\alpha\in (0,1)$.  Assume $w$ is a bounded, positive weight over the domain $\Omega$. Define the following weight set $$W(\alpha)=\left\{w: \Omega \to [0, \infty), \quad \|w\|_\infty<\infty\quad  \text{and} \quad \sup_{x\in \Omega}\left|\int_1^\infty \frac{w(x)}{y^{1+\alpha} w(x-y)}\,dy\right|<\infty \right\}.$$
  \end{definition}
 The set of $W(\alpha)$ is not empty. For example, if $\gamma<\alpha$ and  $w(y)=\frac{1}{1+|y|^\gamma}$, $w(y)\in W(\alpha)$.
\begin{definition}\label{sob-0}
Let $\Omega\subseteq{\real}$, i.e. $\Omega$ can take $\real$ and   $w\in W(\alpha)$.
 The fractional weight Sobolev space $W^{p,\beta}(\Omega, w)$, $p\in [1,\infty)$, $\beta\in (0,1]$, consists of all $\phi\in L^p(\Omega, w)$ such that the norm
$$
\|\phi\|_{ W^{p,\beta} (\Omega,w)}^p =\|\phi\|_{L^p(\Omega, w)}^p +\int_\Omega\int_\Omega\frac{|\phi(x)-\phi(y)|^p}{|x-y|^{p\beta+1}}\,dx\,dy,
$$
is finite. If $p=\infty$, we have 
$$\|\phi\|_{ W^{\infty,\beta}(\Omega,w)}=\|\phi\|_{L^\infty(\Omega, w)}+\operatorname*{esssup}_{(x,y)\in \Omega\times \Omega \setminus{\{x=y\}} }\frac{|\phi(x)-\phi(y)|}{|x-y|^\beta}.$$ 
Attention: this is not a standard fractional weighted Sobolev space, for the standard defintion we refer to \cite{1985_Triebel,1994_Triebel}.

If we take $w=1$, we have the classical  fractional Sobolev sapce $W^{p,\beta}(\Omega)$, $p\in [1,\infty)$, $\beta\in (0,1]$, consists of all $\phi\in L^p(\Omega)$ such that the norm
$$
\|\phi\|_{ W^{p,\beta}}^p =\|\phi\|_{L^p}^p +\int_\Omega\int_\Omega\frac{|\phi(x)-\phi(y)|^p}{|x-y|^{p\beta+1}}\,dx\,dy,
$$
is finite. If $p=\infty$, we have 
$$\|\phi\|_{ W^{\infty,\beta}}=\|\phi\|_\infty+\operatorname*{esssup}_{(x,y)\in \Omega\times \Omega \setminus{\{x=y\}} }\frac{|\phi(x)-\phi(y)|}{|x-y|^\beta}.$$ 
\end{definition}
\section{Main results}
In general, when we take the derivative of a function, we will lose regularity. In fact, there are similar results for the Marchaud fractional derivative, indicating a potential loss of regularity. 

\begin{theorem}
 Let $\beta>0, \,w\in W(\alpha), \,  \phi\in C_b^\beta (\real, w), \, 0<\alpha\leq\beta\leq1 $.  Then $\mad{}{+}{\alpha}\phi(\cdot)\in C_b^{\beta-\alpha}(\real, w).$
\end{theorem}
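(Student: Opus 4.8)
The plan is to estimate the two structural pieces of $\mad{}{+}{\alpha}\phi$ separately, using the $C_b^\beta(\real,w)$ bound on $\phi$, and then to verify that the result lies in $C_b^{\beta-\alpha}(\real,w)$ by bounding both its weighted sup-norm and its weighted $(\beta-\alpha)$-Hölder seminorm. First I would split the defining integral at the scale $s=1$:
\[
\mad{}{+}{\alpha}\phi(x)=\frac{\alpha}{\Gamma(1-\alpha)}\int_0^1\frac{\phi(x)-\phi(x-s)}{s^{1+\alpha}}\,ds+\frac{\alpha}{\Gamma(1-\alpha)}\int_1^\infty\frac{\phi(x)-\phi(x-s)}{s^{1+\alpha}}\,ds=:I_1(x)+I_2(x).
\]
For $I_1$, the Hölder bound $|\phi(x)-\phi(x-s)|\le C\,|s|^\beta$ (up to the weight; here is where one uses $\phi\in C_b^\beta$) makes the integrand $O(s^{\beta-1-\alpha})$, which is integrable near $0$ precisely because $\alpha<\beta$ — this is the role of the hypothesis $\alpha\le\beta$, with the endpoint $\alpha=\beta$ needing a little extra care (a logarithmic factor, or the observation that $\beta-\alpha=0$ means one only needs boundedness). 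For $I_2$, I would use boundedness of $\phi$ together with the weight condition $w\in W(\alpha)$: the defining property of $W(\alpha)$, namely $\sup_x\left|\int_1^\infty \frac{w(x)}{y^{1+\alpha}w(x-y)}\,dy\right|<\infty$, is exactly what controls the tail contribution in the weighted norm.

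Next I would prove the weighted sup-bound $\|\mad{}{+}{\alpha}\phi\|_{L^\infty(\real,w)}<\infty$. Multiplying by $w(x)$ and using $|\phi(x)-\phi(x-s)|\le C\,|s|^\beta$ together with the pointwise comparison furnished by $w\in W(\alpha)$ (rewriting $w(x)|\phi(x-s)|=\tfrac{w(x)}{w(x-s)}\cdot w(x-s)|\phi(x-s)|$ and absorbing $\|\phi\|_{L^\infty(\real,w)}$), both $I_1$ and $I_2$ are bounded uniformly in $x$. The estimate on $I_1$ here does not even need the weight since $\phi$ is $C^\beta$ with $\beta>\alpha$; only $I_2$ requires the weight hypothesis.

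For the Hölder seminorm I would estimate $\mad{}{+}{\alpha}\phi(x)-\mad{}{+}{\alpha}\phi(x')$ for $|x-x'|=h\le 1$ (the case $h>1$ follows from the sup-bound). The standard device is a second splitting of each integral at the scale $s=h$: on $0<s<h$ one bounds each difference quotient by the $C^\beta$ modulus and integrates $s^{\beta-1-\alpha}$ over $(0,h)$, producing a factor $h^{\beta-\alpha}$; on $s>h$ one first takes the difference in $x$, using $|(\phi(x)-\phi(x-s))-(\phi(x')-\phi(x'-s))|\le |\phi(x)-\phi(x')|+|\phi(x-s)-\phi(x'-s)|\le 2C h^\beta$ combined with the bound $\le C s^\beta$, interpolating to $\lesssim h^{\beta-\alpha}s^{\alpha}$-type behavior so that the $s$-integral converges, and for the far tail ($s>1$) again invoking $w\in W(\alpha)$. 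Collecting the pieces gives $|\mad{}{+}{\alpha}\phi(x)-\mad{}{+}{\alpha}\phi(x')|\le C\,w\text{-factor}\cdot h^{\beta-\alpha}$, which is the claim.

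The main obstacle I anticipate is the bookkeeping at the endpoint $\alpha=\beta$ and, more substantially, the careful handling of the weight throughout: every time a term $\phi(x-s)$ appears it must be paired against $w(x-s)$ rather than $w(x)$, and the discrepancy $w(x)/w(x-s)$ must be controlled — near $s$ small this needs some regularity or boundedness assumption on $w$ implicit in "$w$ bounded, positive," and for $s\ge 1$ it is governed by the $W(\alpha)$ condition. Making the weighted Hölder estimate genuinely uniform in $x$, rather than just pointwise, is the delicate part; the rest is routine splitting-of-integrals analysis of the kind used in the proof of Lemma~\ref{mar-rl-frac}.
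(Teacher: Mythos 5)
Your proposal is correct and its first half (the weighted sup-bound) is essentially the paper's argument: split the integral at $s=1$, use the unweighted H\"older seminorm $|\phi(x)-\phi(x-s)|\le Cs^\beta$ together with $\|w\|_\infty<\infty$ on $(0,1)$ (which is where $\alpha<\beta$ is really used), and on $(1,\infty)$ write $w(x)|\phi(x-s)|=\tfrac{w(x)}{w(x-s)}\,w(x-s)|\phi(x-s)|$ and invoke the defining integral condition of $W(\alpha)$. For the seminorm you take a slightly different route: you split at the scale $s=h=|x-x'|$ and bound the integrand by $\min$-type estimates ($2Cs^\beta$ for $s<h$, $2Ch^\beta$ for $s>h$), each piece giving $h^{\beta-\alpha}$; the paper instead shifts variables $t\mapsto t+h$ in one of the two integrals and estimates separately a term $|\phi(x+h)-\phi(x)|\int_0^\infty(t+h)^{-1-\alpha}dt$, a kernel-difference term $\int_0^\infty t^\beta\,|(t+h)^{-1-\alpha}-t^{-1-\alpha}|\,dt\lesssim h^{\beta-\alpha}$, and a boundary term over $(-h,0)$. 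Both decompositions are standard and yield the same $h^{\beta-\alpha}$ bound; yours is perhaps slightly more elementary. Two small clean-ups: in this paper the H\"older seminorm of $C_b^\beta(\real,w)$ is \emph{unweighted} (only the sup-norm carries $w$), so your worries about pairing $\phi(x-s)$ with $w(x-s)$ and invoking $W(\alpha)$ in the seminorm estimate are unnecessary, and there should be no ``$w$-factor'' in the final H\"older bound; moreover your restriction to $h\le 1$ with an appeal to the sup-bound for $h>1$ is both unneeded (your two integral estimates are valid for every $h>0$) and not quite legitimate as stated, since the sup-bound you proved is weighted while the seminorm is not. Finally, note that at the endpoint $\alpha=\beta$ the integral $\int_0^1 s^{\beta-1-\alpha}\,ds$ diverges, so neither your argument nor the paper's actually covers $\alpha=\beta$; this is a defect of the stated hypothesis, not of your proposal.
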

\begin{proof}
 We define the space $C_b^\beta (\real, w)$ equipped with the norm  $$\|\phi\|_{C_b^{\beta}(\real, w)}=\|\phi w\|_{\infty}+\operatorname*{sup}_{(x,y)\in \Omega\times \Omega \setminus{\{x=y\}} }\frac{|\phi(x)-\phi(y)|}{|x-y|^\beta}.$$
 In view of \eqref{mar-def+}, it is sufficient to show 
    $$\mad{}{+}{\alpha}\phi(x)\in C_b^{\beta-\alpha}(\real, w).$$
First we show boundedness:
\begin{align*}
&\sup_{x\in \real} \left|\mad{}{+}{\alpha}\phi(x) w(x)\right|\\&=\sup_{x\in \real} \left|w(x)\int_{0}^\infty \frac{\phi(x)-\phi(x-t)}{t^{1+\alpha}}\,dt\right|
\\&\leq \sup_{x\in \real} |w(x)|\sup_{x\in \real} \left|\int_{0}^1\frac{\phi(x)-\phi(x-t)}{t^{1+\alpha}}\,dt\right|+ \sup_{x\in \real}\left|w(x)\int_{1}^\infty \frac{\phi(x)-\phi(x-t)}{t^{1+\alpha}}\,dt\right|
\\&\leq C\sup_{x\in \real} \left\{\left|\int_{0}^1\frac{\phi(x)-\phi(x-t)}{t^{1+\alpha}}\,dt\right|\right\}+\sup_{x\in \real}\left\{\left|w(x)\int_{1}^\infty \frac{\phi(x)-\phi(x-t)}{t^{1+\alpha}}\,dt\right|\right\}
\\&\leq C_1\sup_{x\in \real} \left\{\left|\int_{0}^1\frac{t^\beta}{t^{1+\alpha}}\,dt\right|+\left|\int_{1}^\infty \frac{1}{t^{1+\alpha}}\,dt\right|+\left|\int_1^\infty \frac{w(x)}{t^{1+\alpha} w(x-t)}\,dt\right|\right\}
<\infty.
\end{align*}
Secondly, we have 
\begin{align*}
   &\frac{\Gamma(1-\alpha)}{\alpha}\left|\mad{}{+}{\alpha}\phi(x+h)-\mad{}{+}{\alpha}\phi(x)\right|\\&= \left|\int_{0}^\infty \frac{\phi(x+h)-\phi(x+h-t)}{t^{1+\alpha}}\,dt-\int_{0}^\infty \frac{\phi(x)-\phi(x-t)}{t^{1+\alpha}}\,dt\right|
   \\&= \left|\int_{-h}^\infty \frac{\phi(x+h)-\phi(x-t)}{(t+h)^{1+\alpha}}\,dt-\int_{0}^\infty \frac{\phi(x)-\phi(x-t)}{t^{1+\alpha}}\,dt\right|
    \\&= \left|\int_{0}^\infty \frac{\phi(x+h)-\phi(x-t)}{(t+h)^{1+\alpha}}\,dt-\int_{0}^\infty \frac{\phi(x)-\phi(x-t)}{t^{1+\alpha}}\,dt+\int_{-h}^0 \frac{\phi(x+h)-\phi(x-t)}{(t+h)^{1+\alpha}}\,dt\right|
    \\ &\leq  \left|\int_{0}^\infty \frac{\phi(x+h)-\phi(x-t)}{(t+h)^{1+\alpha}}\,dt-\int_{0}^\infty \frac{\phi(x)-\phi(x-t)}{t^{1+\alpha}}\,dt\right|+\left|\int_{-h}^0 \frac{\phi(x+h)-\phi(x-t)}{(t+h)^{1+\alpha}}\,dt\right|
    \\ &= \left|\int_{0}^\infty \frac{\phi(x+h)-\phi(x-t)}{(t+h)^{1+\alpha}}+\frac{\phi(x)}{(t+h)^{1+\alpha}}-\frac{\phi(x)}{(t+h)^{1+\alpha}}+\frac{\phi(x)-\phi(x-t)}{t^{1+\alpha}}\,dt\right|+\text{I}
     \\ &\leq \left|\int_{0}^\infty \frac{\phi(x)-\phi(x-t)}{(t+h)^{1+\alpha}-t^{1+\alpha}}\right|\,dt+\int_{0}^\infty\left|\frac{\phi(x+h)-\phi(x)}{(t+h)^{1+\alpha}}\right|\,dt+\text{I}
    \\&=\text{III}+\text{II}+\text{I}
\end{align*}
It is easy to see that 
$$\text{III}\leq M \int_{0}^\infty t^\beta \left|(t+h)^{-1-\alpha}-t^{-1-\alpha}\right|\,dt\leq M h^{\beta-\alpha},$$
and $$\text{II}\leq M h^\beta \int_{0}^\infty (t+h)^{-1-\alpha}\,dt \leq M h^{\beta-\alpha}.$$
Hence, we have $\|\mad{}{+}{\alpha}\phi\|_{C^{\beta-\alpha}}\leq M\|\phi\|_{C^\beta}$.
\end{proof}
For  $w=1$, we obtain the following result for $C^s(\real)$, $s\in (0,1)$.  Applying the $\alpha$--order Marchaud fractional derivative to a function in $C^s(\real)$, we will lose $\alpha$ orders of smoothness. 
\begin{corollary} \label{rie-lil-7} 
    Let $\beta>0, \,  \phi\in C^\beta (\real), \, 0<\alpha<\beta\leq1 $.  Then $\mad{}{+}{\alpha}\phi(\cdot)\in C^{\beta-\alpha}(\real).$
\end{corollary}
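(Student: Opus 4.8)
The plan is to deduce this statement directly from the preceding theorem by specializing to the trivial weight $w \equiv 1$. First I would check that the constant function $w \equiv 1$ belongs to $W(\alpha)$: clearly $\|w\|_\infty = 1 < \infty$, and
\[
\sup_{x\in\real}\left|\int_1^\infty \frac{w(x)}{y^{1+\alpha}w(x-y)}\,dy\right| = \int_1^\infty \frac{dy}{y^{1+\alpha}} = \frac{1}{\alpha} < \infty,
\]
so $1 \in W(\alpha)$. With this weight the norm $\|\phi\|_{C_b^\beta(\real,w)} = \|\phi w\|_\infty + \sup_{x\neq y}|\phi(x)-\phi(y)|/|x-y|^\beta$ reduces to $\|\phi\|_\infty$ plus the $\beta$--H\"older seminorm, i.e.\ $C_b^\beta(\real,1) = C^\beta(\real)$, and likewise $C_b^{\beta-\alpha}(\real,1) = C^{\beta-\alpha}(\real)$.

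Since $0 < \alpha < \beta \leq 1$ in particular implies $0 < \alpha \leq \beta \leq 1$, the hypotheses of the theorem are satisfied for every $\phi \in C^\beta(\real) = C_b^\beta(\real,1)$, and the conclusion $\mad{}{+}{\alpha}\phi \in C_b^{\beta-\alpha}(\real,1) = C^{\beta-\alpha}(\real)$ follows at once, together with the quantitative bound $\|\mad{}{+}{\alpha}\phi\|_{C^{\beta-\alpha}} \leq M\|\phi\|_{C^\beta}$.

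There is essentially no obstacle here, since the content is entirely contained in the theorem; the only thing worth remarking is where the strict inequality $\alpha<\beta$ of the corollary enters. If one prefers a self-contained argument, one simply repeats the boundedness estimate and the $\text{I}+\text{II}+\text{III}$ decomposition of the increment $\mad{}{+}{\alpha}\phi(x+h) - \mad{}{+}{\alpha}\phi(x)$ from the proof of the theorem, with every factor $w(x)$, $w(x-t)$ replaced by $1$: this removes the term $\int_1^\infty w(x)/(t^{1+\alpha}w(x-t))\,dt$ from the boundedness bound (it becomes $1/\alpha$) and leaves terms $\text{I}, \text{II}, \text{III}$ unchanged. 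The one place where $\alpha < \beta$ is genuinely needed is the estimate of $\text{I} = |\int_{-h}^0 (\phi(x+h)-\phi(x-t))(t+h)^{-1-\alpha}\,dt|$: bounding the numerator by $M(t+h)^\beta$ and substituting $u = t+h$ gives $\text{I} \leq M\int_0^h u^{\beta-1-\alpha}\,du = M h^{\beta-\alpha}/(\beta-\alpha)$, which is finite precisely because $\beta - \alpha > 0$. Combining this with the bounds $\text{II}, \text{III} \leq M h^{\beta-\alpha}$ already recorded in the theorem's proof yields the claimed $(\beta-\alpha)$--H\"older continuity.
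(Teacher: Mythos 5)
Your proof is correct and follows the paper's own route: the corollary is obtained simply by specializing the preceding weighted theorem to $w\equiv 1$, after checking $1\in W(\alpha)$ and that the weighted norm collapses to the classical H\"older norm. Your extra remark estimating the term $\mathrm{I}$ by $Mh^{\beta-\alpha}/(\beta-\alpha)$, which pinpoints where the strict inequality $\alpha<\beta$ enters, is a useful supplement since the paper leaves that term unestimated.
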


When $p\geq 1$, the $\alpha$ Marchaud fractional derivatives maps $W^{1, s}(\real, w)$ to  $W^{1,s-\alpha}(\real, w)$ under certain conditions. 

\begin{theorem} 
Let $w\in W(\alpha)$ and $\phi\in W^{1,s}(\real, w),\, 0<\alpha <s<1$, then  we have  $\mad{}{+}{\alpha}\phi\in W^{1, s-\alpha}(\real, w)$.
\end{theorem}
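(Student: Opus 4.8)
The plan is to reduce everything to the alternative pointwise expression for the Marchaud derivative and its even/killing decomposition, then estimate the two pieces of the $W^{1,s-\alpha}$-norm (the weighted $L^1$-part and the Gagliardo seminorm) directly. First I would write, using \eqref{mar-def+},
\[
\mad{}{+}{\alpha}\phi(x) = \frac{\alpha}{\Gamma(1-\alpha)}\int_0^1 \frac{\phi(x)-\phi(x-t)}{t^{1+\alpha}}\,dt + \frac{\alpha}{\Gamma(1-\alpha)}\int_1^\infty \frac{\phi(x)-\phi(x-t)}{t^{1+\alpha}}\,dt =: A\phi(x)+B\phi(x),
\]
and control the two summands separately. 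For $B\phi$ the weight bound $w\in W(\alpha)$ is exactly what is needed: $\int_\real |B\phi(x)|\,w(x)\,dx$ splits into $\int\int_1^\infty \frac{|\phi(x)|\,w(x)}{t^{1+\alpha}}\,dt\,dx$, which is $\lesssim \|\phi\|_{L^1(\real,w)}$, and $\int\int_1^\infty \frac{|\phi(x-t)|\,w(x)}{t^{1+\alpha}}\,dt\,dx$, which after the change of variables $y=x-t$ becomes $\int |\phi(y)| \big(\int_1^\infty \frac{w(y+t)}{t^{1+\alpha}}\,dt\big)\,dy \lesssim \|\phi\|_{L^1(\real,w)}$ using the defining property of $W(\alpha)$ (the second condition controls exactly this ratio $w(x)/w(x-t)$).

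For $A\phi$, which is the genuinely "fractional" part carrying the regularity, I would use that $\phi\in W^{1,s}$ means $\phi'\in L^1$ in a suitable sense and more precisely that the Gagliardo $W^{1,s-\alpha}$-seminorm of $A\phi$ can be bounded by the $W^{1,s}$-seminorm of $\phi$. The key computation: for the increment $A\phi(x+h)-A\phi(x)$ I would mimic the change-of-variables/splitting trick from the preceding theorem's proof (shift $t\mapsto t+h$, add and subtract $\phi(x)$, and isolate the small interval $[-h,0]$), obtaining schematically three terms — one of the form $\int_0^1 \big(\phi(x)-\phi(x-t)\big)\big[(t+h)^{-1-\alpha}-t^{-1-\alpha}\big]\,dt$, one of the form $\big(\phi(x+h)-\phi(x)\big)\int_0^1 (t+h)^{-1-\alpha}\,dt$, and a boundary term over $[-h,0]$. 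Integrating $|A\phi(x+h)-A\phi(x)|$ against $w(x)\,dx$, then dividing by $|h|^{1+(s-\alpha)}$ and integrating in $h$ (equivalently, directly forming the double integral $\int\int \frac{|A\phi(x)-A\phi(y)|}{|x-y|^{1+(s-\alpha)}}\,dx\,dy$), I would use Tonelli to bring the $h$-integration inside, exploit the cancellation $\int_0^1 t^{s}\,|(t+h)^{-1-\alpha}-t^{-1-\alpha}|\,dt \lesssim |h|^{s-\alpha}$ (and similarly $\int_0^1(t+h)^{-1-\alpha}\,dt\lesssim |h|^{-\alpha}$) together with the pointwise bound $\int_\real\int_\real \frac{|\phi(x)-\phi(x-t)|}{t^{1+s}}w(x)\,dt\,dx \lesssim \|\phi\|_{W^{1,s}(\real,w)}$ to close the estimate; the weighted $L^1$-part of $\|A\phi\|$ is handled as for $B\phi$ but using $\int_0^1 t^{-\alpha}\,dt<\infty$ and that $\phi\in W^{1,s}\subset L^1_{\mathrm{loc}}$-type control near the diagonal gives $\int_0^1\frac{|\phi(x)-\phi(x-t)|}{t^{1+\alpha}}\,dt$ an $L^1(w)$-in-$x$ bound via the $W^{1,s}$-seminorm since $\alpha<s$.

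The main obstacle I anticipate is the interchange of orders of integration and the bookkeeping of the weight under translation: unlike the unweighted case one cannot simply substitute $y=x-t$ and read off a translation-invariant bound, so every change of variables produces a ratio $w(x)/w(x-t)$ (or $w(y+t)/w(y)$) that must be absorbed either by the $W(\alpha)$-condition (for $t\ge 1$) or by crude boundedness $\|w\|_\infty<\infty$ together with integrability of $t^{-1-\alpha+s}$ near $0$ (for $t\le 1$). Making sure these two regimes patch together cleanly — and that the exponent arithmetic $1+(s-\alpha) = (1+s)-\alpha$ lines up so that the $|h|^{s-\alpha}$ gained from the kernel difference exactly cancels the $|h|^{-1-(s-\alpha)}$ in the Gagliardo denominator — is where the real care is needed; everything else is the same type of elementary kernel estimate already carried out in the two preceding theorems.
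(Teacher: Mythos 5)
You split $\mad{}{+}{\alpha}\phi=A\phi+B\phi$ at $t=1$, but you then only estimate the weighted $L^1$-norm of $B\phi$ and the Gagliardo seminorm of $A\phi$; the Gagliardo seminorm of the tail $B\phi$, which enters $\|\mad{}{+}{\alpha}\phi\|_{W^{1,s-\alpha}(\real,w)}$ on the same footing, is never addressed, and this is a genuine gap rather than a routine omission. The obvious ways to dispatch it fail: bounding $|B\phi(x)-B\phi(y)|\le |B\phi(x)|+|B\phi(y)|$ for $|x-y|\gtrsim 1$, or applying the triangle inequality inside the $t$-integral, reduces matters to the \emph{unweighted} $L^1$-norm $\int_\real|\phi(x)|\,dx$ (indeed $\int_\real|B\phi(x)|\,dx\le C_\alpha\|\phi\|_{L^1(dx)}$), and in the paper's definition of $W^{1,s}(\real,w)$ only the $L^p$-part carries the weight, so $\|\phi\|_{L^1(dx)}$ is \emph{not} controlled by $\|\phi\|_{W^{1,s}(\real,w)}$ when $w$ decays at infinity (e.g.\ $w(y)=(1+|y|^\gamma)^{-1}$, $\gamma<\alpha$: a smoothed plateau $\phi\approx\I_{[0,N]}$ has $\|\phi\|_{W^{1,s}(\real,w)}\sim N^{1-\gamma}+N^{1-s}$ but $\|\phi\|_{L^1(dx)}\sim N$, so any estimate routed through the unweighted $L^1$-norm cannot close). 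To make your scheme work you would have to keep the difference structure of $B\phi(x+h)-B\phi(x)$ and exploit the kernel cancellation $|(t+h)^{-1-\alpha}-t^{-1-\alpha}|$ also for $t\ge 1$ — i.e.\ exactly the analysis you reserve for $A\phi$ — together with the boundary terms created at the cutoff $t=1$ by your shift $t\mapsto t+h$ (you mention the piece on $[-h,0]$ but not the mismatch near $t=1$, which appears in both $A$ and $B$).

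Two further points of calibration. The paper avoids this difficulty by never splitting at $t=1$: it writes $\mad{}{+}{\alpha}\phi(x)=c\int_{-\infty}^x\frac{\phi(x)-\phi(t)}{(x-t)^{1+\alpha}}\,dt$, splits (for $y<x$) the inner integral at the moving point $t=y$, adds and subtracts $\phi(x)-\phi(y)$ on the common range and estimates $|(x-t)^{-1-\alpha}-(y-t)^{-1-\alpha}|$; every resulting term is then bounded by the \emph{unweighted} $W^{1,s}$-Gagliardo seminorm, the weight and the condition $w\in W(\alpha)$ being used only in the $L^1(\real,w)$-term, much as in your treatment of that term. Related to this, note that the Gagliardo part of the paper's weighted norm is unweighted, so your plan to integrate the increments $|A\phi(x+h)-A\phi(x)|$ against $w(x)\,dx$ goes the wrong way (boundedness of $w$ gives weighted $\le$ unweighted, not the bound you need); simply drop the weight in the seminorm estimates. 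Finally, the aside that $\phi\in W^{1,s}$ gives ``$\phi'\in L^1$ in a suitable sense'' is neither correct for $s<1$ nor needed, since all your estimates only use the seminorm.
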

\begin{proof}
    According to the definition of $W^{1,s-\alpha}(\real, w)$, we have the following expression
\begin{align*}
\|\mad{}{+}{\alpha}\phi\|_{W^{1,s-\alpha}}&=\|\mad{}{+}{\alpha}\phi\|_{L^1(\real, w)}+\left(\int_\real\int_\real\frac{|\mad{}{+}{\alpha}\phi(x)-\mad{}{+}{\alpha}\phi(y)|}{|x-y|^{(s-\alpha)+1}}\,dx\,dy\right)
\\&=\text{I $+$ II}.
\end{align*}
For the term I we have,
     \begin{align*}
         &\|\mad{}{+}{\alpha}\phi\|_{L^1(\real, w)}\\&=\int_\real\int_{-\infty}^x\frac{|\phi(x)-\phi(y)|}{|x-y|^{1+\alpha}} \,dy \,w(x)\,dx
         \\&\leq  \int_\real \int_\real \frac{|\phi(x)-\phi(y)|}{|x-y|^{1+\alpha}} \,dy  \,w(x)\,dx
         \\&\leq  \int_\real \int_{|x-y|\geq1} \frac{|\phi(x)-\phi(y)|}{|x-y|^{1+\alpha}}\,dy  \,w(x)\,dx+\sup_{x\in \real} w(x) \int_\real \int_{|x-y|\leq1} \frac{|\phi(x)-\phi(y)|}{|x-y|^{1+\alpha}}\,dy \,dx
          \\&\leq 2\int_\real \int_{|z|\geq1} \frac{|\phi(x)|}{|z|^{1+\alpha}}\,dz \, w(x)\,dx+\sup_{x\in \real} w(x) \int_\real \int_{|x-y|\leq1} \frac{|\phi(x)-\phi(y)|}{|x-y|^{1+\alpha}}\,dy \,dx
           \\&\leq 2  M \|\phi\|_{L^1(\real, w)} +\sup_{x\in \real} w(x) \int_\real \int_{|x-y|\leq1} \frac{|\phi(x)-\phi(y)|}{|x-y|^{1+s}}\,dy \,dx
           \\&\leq 2  M \|\phi\|_{L^1(\real, w)} + \|w\|_\infty\int_\real \int_\real \frac{|\phi(x)-\phi(y)|}{|x-y|^{1+s}}\,dy \,dx
           \\&\leq C \|\phi\|_{W^{1,s}(\real, w)},
     \end{align*}
    for the last third inequality, we use the integrability of $\int_{|z|\geq1}\frac{1}{|z|^{1+\alpha}}\,dz$ and the monotonicity of $|x-y|^{-1-\alpha}$.
     
     For the term II.
without loss of generality, we assume $x\geq y$ and use the symmetry. We have the following estimate, 
\begin{align*}
& \int_\real\int_\real \frac{\bigg|\int_{-\infty}^x\frac{\phi(x)-\phi(t)}{(x-t)^{1+\alpha}}\,dt-\int_{-\infty}^y\frac{\phi(y)-\phi(t)}{(y-t)^{\alpha+1}}\,dt\bigg|}{|x-y|^{(s-\alpha)+1}} \, dx\,dy
\\&= \int_\real\int_\real \frac{\bigg|\int_{-\infty}^x\frac{\phi(x)-\phi(t)}{(x-t)^{1+\alpha}}\,dt-\int_{-\infty}^y\frac{\phi(y)-\phi(t)}{(y-t)^{\alpha+1}}\,dt\bigg|}{|x-y|^{(s-\alpha)+1}} \, dx\,dy
\\&\leq \int_\real\int_\real \frac{\bigg|\int_{-\infty}^y \frac{\phi(x)-\phi(t)}{(x-t)^{1+\alpha}}\,dt-\int_{-\infty}^y\frac{\phi(y)-\phi(t)}{(y-t)^{\alpha+1}}\,dt\bigg|}{|x-y|^{(s-\alpha)+1}} \, dx\,dy + \int_\real\int_\real \frac{\bigg|\int_y^x \frac{\phi(x)-\phi(t)}{(x-t)^{1+\alpha}}\,dt\bigg|}{|x-y|^{(s-\alpha)+1}} \, dx\,dy
\\&=\text{III+IV}.
\end{align*}
For term IV, we can just use Fubini's theorem. In fact, 
\begin{align*}
&\int_\real\int_\real \frac{\bigg|\int_y^x \frac{\phi(x)-\phi(t)}{(x-t)^{1+\alpha}}\,dt\bigg|}{|x-y|^{(s-\alpha)+1}} \, dx\,dy
\\&=\int_\real\int_{\infty}^x \int_{-\infty}^t \frac{ |\phi(x)-\phi(t)||x-t|^{-1-\alpha}}{|x-y|^{(s-\alpha)+1}} \,dy \,dt\, dx
\\&\leq  M\int_\real\int_\real  \frac{|\phi(x)-\phi(t)|}{|x-t|^{1+s}} \, dx\,dt
\\&\leq  M\left(\int_\real\int_\real  \frac{|\phi(x)-\phi(t)|}{|x-t|^{1+s}} \, dx\,dt+\|\phi\|_{L^1(\real, w)}\right)
\\&= M\|\phi\|_{W^{1, s}(\real, w)}
\end{align*}
In the penultimate inequality estimate we calculate the integral exactly and use $|x|^{\alpha-s}\leq |x-t|^{\alpha-s}$ 
to get the above result.
\\For part III, we will get 
\begin{align*}
 &\int_\real\int_\real\frac{\bigg|\int_{-\infty}^y \frac{\phi(x)-\phi(t)}{(x-t)^{1+\alpha}}\,dt-\int_{-\infty}^y\frac{\phi(y)-\phi(t)}{(y-t)^{\alpha+1}}\,dt\bigg|}{|x-y|^{(s-\alpha)+1}} \,dy \, dx
\\&\leq  \int_\real\int_\real \frac{ |\phi(x)-\phi(y)| \int_{-\infty}^y (x-t)^{-1-\alpha}\,dt}{|x-y|^{(s-\alpha)+1}}  \,dy \, dx\\&\quad+ \int_\real\int_\real \frac{\int_{-\infty}^y|\phi(y)-\phi(t)||(x-t)^{-\alpha-1}-(y-t)^{-\alpha-1}|\,dt}{|x-y|^{(s-\alpha)+1}} \,dy \, dx
\\&\leq M \left(\int_\real\int_\real \frac{ |\phi(x)-\phi(y)| }{|x-y|^{s+1}} \,dy \, dx+\|\phi\|_{L^1(\real, w)}\right)\\&\quad+ \int_\real\int_\real\frac{\int_{-\infty}^y|\phi(y)-\phi(t)||(x-t)^{-\alpha-1}-(y-t)^{-\alpha-1}|\,dt}{|x-y|^{(s-\alpha)+1}}\,dy \, dx
\\&= M\|\phi\|_{W^{1,s}(\real, w)}+ \int_\real\int_\real\frac{\int_{-\infty}^y|\phi(y)-\phi(t)||(x-t)^{-\alpha-1}-(y-t)^{-\alpha-1}|\,dt}{|x-y|^{(s-\alpha)+1}} \,dy \, dx.
\end{align*}
Next we deal with the the right hand side, by using the monotonicity of $x^\alpha$ to get
\begin{align*}
&\int_\real\int_\real \frac{\int_{-\infty}^y|\phi(y)-\phi(t)||(x-t)^{-\alpha-1}-(y-t)^{-\alpha-1}|\,dt}{|x-y|^{(s-\alpha)+1}} \,dy \, dx
\\&\leq C_\alpha \int_\real\int_\real \frac{\int_{-\infty}^y|\phi(y)-\phi(t)|\frac{(x-t)^\alpha |x-y|}{(x-t)^{\alpha+1}(y-t)^{\alpha+1}}|\,dt}{|x-y|^{(s-\alpha)+1}}\,dy \, dx
\\&=  C_\alpha \int_\real\int_\real \int_{-\infty}^y \frac{|\phi(y)-\phi(t)|}{|y-t|^{s+1}} \frac{|y-t|^{s+1}}{|x-y|^{s-\alpha}|x-t||y-t|^{\alpha+1}} \,dt\,dy \, dx
\\&=C_\alpha \int_\real\int_t^\infty \int_y^\infty \frac{|\phi(y)-\phi(t)|}{|y-t|^{s+1}} \frac{|y-t|^{s+1}}{|x-y|^{s-\alpha}|x-t||y-t|^{\alpha+1}} \, dx\,dy\,dt
\\&\leq C_\alpha M \int_\real\int_{\real}  \frac{|\phi(y)-\phi(t)|}{|y-t|^{s+1}}  \,dy\,dt.
\end{align*}
Finally we show that  $\int_y^\infty  \frac{|y-t|^{s+1}}{|x-y|^{s-\alpha}|x-t||y-t|^{\alpha+1}} \, dx$ is bounded. In fact, after a change of variable we have the following estimate,
\begin{align*}
&\int_y^\infty  \frac{|y-t|^{s+1}}{|x-y|^{s-\alpha}|x-t||y-t|^{\alpha+1}} \, dx
\\&=\int_0^\infty\frac{1}{|\frac{u}{y-t}|^{s-\alpha}} \frac{1}{|1+\frac u{y-t}|}\frac {du}{|y-t|}
\\&= \int_0^\infty\frac{1}{|v|^{s-\alpha}} \frac{1}{|1+v|}dv
\\&<M.
\end{align*}
Since $M<\infty$, so we have $\|\mad{}{+}{\alpha}\phi\|_{W^{1,s-\alpha}(\real, w)} \leq C \|\phi\|_{W^{1,s}(\real, w)}$.
\end{proof}
For $p=1$ and $w=1$, we obtain the following result for $W^{1,s}(\real)$, $s\in (0,1)$.  Applying the $\alpha$ order Marchaud fractional derivative to a function in $W^{1,s}(\real)$, we will lose $\alpha$ orders of smoothness, for the direct proof, see \cite{2023_Li}. 
\begin{corollary} \label{map-frc-2.2.9} Let $ \phi\in W^{1,s}(\real), 0<\alpha <s<1$, then  we have  $\mad{}{+}{\alpha}\phi\in W^{1, s-\alpha}(\real)$.
\end{corollary}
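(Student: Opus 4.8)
The quickest route is to observe that the constant weight $w\equiv 1$ lies in $W(\alpha)$: indeed $\|1\|_\infty=1<\infty$ and $\int_1^\infty y^{-1-\alpha}\,dy=\alpha^{-1}<\infty$, so the supremum condition defining $W(\alpha)$ holds trivially. Hence the corollary is the special case $w=1$ of the preceding theorem, and the estimate $\|\mad{}{+}{\alpha}\phi\|_{W^{1,s-\alpha}(\real)}\le C\,\|\phi\|_{W^{1,s}(\real)}$ is immediate. I would state it this way, and additionally indicate the self-contained argument, which is simply the proof of the theorem with every occurrence of $w(x)$ and $w(x-t)$ replaced by $1$; several steps then collapse.

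For the direct argument, I would write $\mad{}{+}{\alpha}\phi(x)=\frac{\alpha}{\Gamma(1-\alpha)}\int_{-\infty}^x\frac{\phi(x)-\phi(t)}{(x-t)^{1+\alpha}}\,dt$ and split $\|\mad{}{+}{\alpha}\phi\|_{W^{1,s-\alpha}(\real)}$ into its $L^1$-norm and its Gagliardo seminorm of order $s-\alpha$. For the $L^1$-part I would break the inner integral at $|x-t|=1$: on $\{|x-t|\ge 1\}$ the kernel $|x-t|^{-1-\alpha}$ is bounded and integrable, which yields a term controlled by $\|\phi\|_{L^1(\real)}$; on $\{|x-t|\le 1\}$ I would use $|x-t|^{-1-\alpha}\le|x-t|^{-1-s}$ (valid since $\alpha<s$ and $|x-t|\le 1$) to dominate the remaining term by the $W^{1,s}$-seminorm of $\phi$.

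For the seminorm I would assume $x\ge y$ by symmetry and decompose $\mad{}{+}{\alpha}\phi(x)-\mad{}{+}{\alpha}\phi(y)$ into the difference of the two $t$-integrals, both taken over $(-\infty,y)$, plus the leftover piece of the $x$-integral over $(y,x)$. The leftover piece is handled by Fubini, integrating out $y$ first and using $|x|^{\alpha-s}\le|x-t|^{\alpha-s}$, which reduces it to the $W^{1,s}$-seminorm. The remaining term splits once more: a contribution in which one factors out $|\phi(x)-\phi(y)|\int_{-\infty}^y(x-t)^{-1-\alpha}\,dt$, again bounded by the $W^{1,s}$-norm, and a kernel-difference contribution involving $|(x-t)^{-\alpha-1}-(y-t)^{-\alpha-1}|$.

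The main obstacle is this last, kernel-difference term. I would estimate $|(x-t)^{-\alpha-1}-(y-t)^{-\alpha-1}|\le C_\alpha\,(x-y)\,(x-t)^{-1}(y-t)^{-\alpha-1}$ via the mean value theorem applied to $r\mapsto r^{\alpha+1}$ together with $(y-t)\le(x-t)$, then apply Fubini in the order $dt\,dy\,dx$ and the change of variables $v=(x-y)/(y-t)$ to reduce the $x$-integral to $\int_0^\infty v^{-(s-\alpha)}(1+v)^{-1}\,dv$, a finite Beta integral since $0<s-\alpha<1$. What remains is exactly the $W^{1,s}$-seminorm of $\phi$, so collecting all the pieces gives $\|\mad{}{+}{\alpha}\phi\|_{W^{1,s-\alpha}(\real)}\le C\,\|\phi\|_{W^{1,s}(\real)}$.
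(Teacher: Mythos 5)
Your proposal is correct and follows essentially the same route as the paper: the paper obtains this corollary precisely as the $w\equiv 1$, $p=1$ specialization of the preceding weighted theorem (your check that the constant weight lies in $W(\alpha)$ is a detail the paper leaves implicit), and your sketched direct argument — splitting off the $L^1$ part at $|x-t|=1$, the Fubini treatment of the leftover integral over $(y,x)$, the kernel-difference bound $|(x-t)^{-\alpha-1}-(y-t)^{-\alpha-1}|\le C_\alpha(x-y)(x-t)^{-1}(y-t)^{-\alpha-1}$, and the change of variables giving the Beta integral $\int_0^\infty v^{-(s-\alpha)}(1+v)^{-1}\,dv$ — reproduces the paper's proof of that theorem step by step.
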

Now, we use the interpolation theorem to show:  When $p\geq 1$, the $\alpha$ Marchaud fractional derivatives maps weight fractional Sobolev space $W^{p, s}(\real, w)$ to  $W^{p,s-\alpha}(\real, w)$ under certain kind of conditions. 
\begin{theorem}\label{mp-3-5} Let $w\in W(\alpha)$ and  $ \phi\in W^{p,s}(\real, w)$,  $0<\alpha < s<1\leq p$, then  we have  \linebreak $\mad{}{+}{\alpha}\phi\in W^{p,s-\alpha}(\real, w)$.
\end{theorem}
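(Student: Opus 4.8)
The plan is to obtain the statement by complex interpolation, via Theorem~\ref{clit-11}, between the two cases already settled above: the \emph{endpoint $p=\infty$}, where $\mad{}{+}{\alpha}$ maps $C_b^s(\real,w)=W^{\infty,s}(\real,w)$ boundedly into $W^{\infty,s-\alpha}(\real,w)$ (reading the $\infty$-endpoint norm as in the $C_b^{s}(\real,w)$-theorem above, i.e.\ $\|\psi\|_{L^\infty(\real,w)}=\|\psi w\|_\infty$), and the \emph{endpoint $p=1$}, where $\mad{}{+}{\alpha}$ maps $W^{1,s}(\real,w)$ boundedly into $W^{1,s-\alpha}(\real,w)$. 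The case $p=1$ of the theorem is the latter statement, so I may assume $p\in(1,\infty)$. I would set $\mathcal{X}:=W^{1,s}(\real,w)\cap W^{\infty,s}(\real,w)$, equipped with the two norms $\|\cdot\|_{\mathcal X_0}:=\|\cdot\|_{W^{\infty,s}(\real,w)}$ and $\|\cdot\|_{\mathcal X_1}:=\|\cdot\|_{W^{1,s}(\real,w)}$, and likewise $\mathcal{Y}:=W^{1,s-\alpha}(\real,w)\cap W^{\infty,s-\alpha}(\real,w)$ with $\|\cdot\|_{\mathcal Y_0}:=\|\cdot\|_{W^{\infty,s-\alpha}(\real,w)}$ and $\|\cdot\|_{\mathcal Y_1}:=\|\cdot\|_{W^{1,s-\alpha}(\real,w)}$. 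In each couple the two norms are consistent, since convergence in either one forces a.e.\ convergence (as both dominate an $L^1_{loc}$-type norm, because $w>0$ a.e.). Note that $C_c^\infty(\real)\subseteq\mathcal X$ and $C_c^\infty(\real)\subseteq\mathcal Y$, so $\mathcal X$ and $\mathcal Y$ are dense in $W^{p,s}(\real,w)$ resp.\ $W^{p,s-\alpha}(\real,w)$ for $p<\infty$.

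Then I would apply Theorem~\ref{clit-11} to the \emph{constant} operator family $\Lambda(z):=\mad{}{+}{\alpha}$, $z\in\overline S$ (exactly as one gets the Riesz--Thorin theorem for a fixed operator). This family is analytic, continuous and uniformly bounded as an $L(\mathcal X_0+\mathcal X_1,\mathcal Y_0+\mathcal Y_1)$-valued function, the bound coming from the two endpoint bounds; hypothesis (1) of Theorem~\ref{clit-11} holds because $\mad{}{+}{\alpha}(\mathcal X)\subseteq\mathcal Y$ by the two endpoint theorems; and hypotheses (2) and (3) are precisely those endpoint theorems, with constants $M_0$ and $M_1$. Theorem~\ref{clit-11} then yields $\mad{}{+}{\alpha}:\mathcal X_t\to\mathcal Y_t$ with $\|\mad{}{+}{\alpha}\|_{L(\mathcal X_t,\mathcal Y_t)}\le M_0^{1-t}M_1^{t}$, where $t\in(0,1)$ is the parameter for which the intermediate exponent of the couple equals $p$. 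Modulo the identification $\mathcal X_t=W^{p,s}(\real,w)$ and $\mathcal Y_t=W^{p,s-\alpha}(\real,w)$ with equivalent norms, this finishes the proof: for $\phi\in W^{p,s}(\real,w)=\mathcal X_t$ we get $\mad{}{+}{\alpha}\phi\in\mathcal Y_t=W^{p,s-\alpha}(\real,w)$ with $\|\mad{}{+}{\alpha}\phi\|_{W^{p,s-\alpha}(\real,w)}\le M_0^{1-t}M_1^{t}\|\phi\|_{W^{p,s}(\real,w)}$.

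Identifying these interpolation spaces is, I expect, the main obstacle. The idea is to reduce to the $L^p$-interpolation of Example~\ref{exa-12} by realising $W^{p,\beta}(\real,w)$ inside a product of $L^p$-spaces over measures that \emph{do not depend on $p$}: rewriting the Gagliardo term as $\int_\real\int_\real\bigl|\tfrac{\phi(x)-\phi(y)}{|x-y|^{\beta}}\bigr|^{p}\tfrac{dx\,dy}{|x-y|}$, the map $J_\beta\colon\phi\mapsto\bigl(\phi,\ (x,y)\mapsto\tfrac{\phi(x)-\phi(y)}{|x-y|^{\beta}}\bigr)$ embeds $W^{p,\beta}(\real,w)$ isometrically onto a closed subspace of $L^p(\real,w\,dx)\oplus L^p\bigl(\real^2,\tfrac{dx\,dy}{|x-y|}\bigr)$, and such a product interpolates in $p$ just as $L^p$ does (the weighted first factor by a Stein--Weiss change-of-measure argument using $\|w\|_\infty<\infty$, with care about the weight convention at the $\infty$-endpoint). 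The difficulty is that the image of $J_\beta$ is \emph{not} complemented in this product — a bounded projection would force the Gagliardo seminorm to be dominated by the $L^p$-norm, which is false — so one cannot merely invoke the retract theorem for interpolation. In the unweighted case this is circumvented by passing instead to the Littlewood--Paley description $W^{p,\beta}(\real)=B^\beta_{p,p}(\real)$, where the synthesis operator does give a genuine bounded retraction of $\ell^p(2^{\beta k};L^p)$ onto $B^\beta_{p,p}$, so that complex interpolation transfers (see Triebel~\cite{1985_Triebel,1994_Triebel} and Leoni~\cite{2023_Leoni}); the bounded weight $w$, which enters only the $L^p$-part of the norm, is then reincorporated by the same change-of-measure argument, and one checks that the Calder\'on extremal functions attached to $\phi\in\mathcal X$ stay in the relevant endpoint spaces, so that $\mathcal X_t$ and $\mathcal Y_t$ are indeed the asserted weighted Slobodeckij spaces. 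With those identifications in place, the estimate produced by Theorem~\ref{clit-11} is exactly the claim.
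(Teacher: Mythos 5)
Your skeleton coincides with the paper's: apply Theorem \ref{clit-11} to the constant family $\Lambda(z)=\mad{}{+}{\alpha}$, using the weighted H\"older ($W^{\infty,\cdot}$) endpoint and the $W^{1,s}(\real,w)\to W^{1,s-\alpha}(\real,w)$ endpoint, and then identify the Calder\'on--Lions spaces $\mathcal{X}_t,\mathcal{Y}_t$ with $W^{p,s}(\real,w)$ and $W^{p,s-\alpha}(\real,w)$. The genuine gap is in that identification, which is in fact the bulk of the paper's proof. You note, correctly, that the map $\phi\mapsto\bigl(\phi,\,(\phi(x)-\phi(y))/|x-y|^{s}\bigr)$ only embeds $W^{p,s}(\real,w)$ as a (presumably non-complemented) closed subspace of $L^p(w\,dx)\times L^p(dx\,dy/|x-y|)$, so the retract theorem is not directly available; but the substitute you propose --- passing to the Littlewood--Paley description $W^{p,\beta}=B^{\beta}_{p,p}$ and then ``reincorporating'' the weight by a Stein--Weiss change of measure --- does not apply to the space actually at hand. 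Here only the $L^p$-part of the norm carries the weight $w$ while the Gagliardo seminorm is unweighted (the paper explicitly warns this is not a standard weighted space), so $W^{p,s}(\real,w)$ is not a retract of the standard (weighted) sequence-space couples produced by the Besov machinery, and the weight cannot be spliced back in after an unweighted interpolation; in addition, the endpoint you need involves $B^{s}_{\infty,\infty}$-type and $\ell^\infty$-valued couples, exactly where the vector-valued complex-interpolation identities are delicate. Your closing remark that ``one checks that the Calder\'on extremal functions attached to $\phi\in\mathcal{X}$ stay in the relevant endpoint spaces'' is a statement of what must be proved, not a proof.

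What the paper does instead --- and what is missing from your proposal --- is a direct two-sided norm identification on the dense class $\mathcal{X}=C_c^\infty(\real)$, with no projection needed. For $W^{1/t,s}(\real,w)\subset\mathcal{X}_t$ it writes down the explicit Calder\'on-type analytic family $g(z)$ obtained by raising $|\phi(u)|$ and $\bigl|(\phi(x)-\phi(y))/|x-y|^{s}\bigr|$ to the power $z/t$ while keeping the phases, and applies the three lines theorem to get $\|\phi\|_{\mathcal{X}_t}\le 2\|\phi\|_{W^{1/t,s}(\real,w)}$. For the converse it takes an arbitrary $\psi\in F[\mathcal{X}]$ with $\psi(t)=\phi$, pairs it against test pairs $(\phi,l)$ of unit $\|\cdot\|_{1/(1-t)}$-norm through the analytic function $H(z)$, applies the three lines theorem again, and uses the duality of $(L^{1/t},L^{1/(1-t)})$ to conclude $\|\phi\|_{W^{1/t,s}(\real,w)}\le\|\phi\|_{\mathcal{X}_t}$. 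This duality device is precisely how one circumvents the complementation obstruction you ran into; without it (or a correctly worked-out weighted replacement for your Besov detour) your argument does not establish $\mathcal{X}_t=W^{p,s}(\real,w)$, and hence does not yield the theorem.
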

\begin{proof}

 We want to use the  Calder\'on--Lions interpolation theorem, cf. Theorem \ref{clit-11}.  Using the notation from Theorem \ref{clit-11}, we have $$\mathcal{X}=C_c^\infty(\real), \quad \mathcal{Y}=W^{1, s-\alpha}(\real, w)\cap W^{\infty, s-\alpha}(\real, w)$$
and $$\|\cdot\|_{\mathcal{X}_1}=\|\cdot\|_{W^{1,s}}, \quad \|\cdot\|_{\mathcal{X}_0}=\|\cdot\|_{C^s},$$ $$\|\cdot\|_{\mathcal{Y}_1}=\|\cdot\|_{W^{1, s-\alpha}}, \quad \|\cdot\|_{\mathcal{Y}_0}=\|\cdot\|_{W^{\infty, s-\alpha}}.$$
Denote $C_0^s (\real, w)=\overline{\mathcal{X}}^{\|\cdot\|_{C_b^s(\real, w)}}\subset C_b^s (\real, w)$.  Moreover  $T(t)=\mad{}{+}{\alpha}$ (independent of $t$).

We know $\mad{}{+}{\alpha}: C_0^s (\real, w)\to C_b^{s-\alpha}(\real, w)$,  as well as $\mad{}{+}{\alpha}:W^{1, s}(\real, w)\to W^{1, s-\alpha}(\real, w)$ see  Theorem \ref{rie-lil-7} and \ref{map-frc-2.2.9}, respectively. 
We also have see Definition \ref{sob-0}, \linebreak $ C_b^{s-\alpha}(\real,w)=W^{\infty, s-\alpha}(\real, w)$ and $C_b^s (\real,w)=W^{\infty, s}(\real,w)$.  Therefore, all three conditions  of Theorem \ref{clit-11} are satisfied,  and we get that $\mad{}{+}{\alpha}: \mathcal{X}_t\to \mathcal{Y}_t$, $t\in [0,1]$. It remains to identify the interpolation spaces.  We will see that 
$$ \mathcal{X}_t=W^{1/t,s}(\real, w), \quad \text{and} \quad \mathcal{Y}_t=W^{1/t, s-\alpha}(\real, w).$$
In order to identify $\mathcal{X}_t$ we remark that we can identify $W^{1/t,s}(\real, w)$ with a closed subspace of 
$$L^{1/t}\left(\real, w(x)dx\right)\times L^{1/t}\left(\real\times\real\setminus{\{x=y\}}, \quad \frac{dx\,dy}{|x-y|}\right),$$
which is equipped with the norm 
$$\|(g,h)\|_{1/t}^t=\|g\|_{L^{1/t}(w(x)dx)}^t+\|h\|_{L^{1/t}(dm)}^t,$$
 where $dm=dx\,dy/|x-y|$.

\noindent Therefore, it will be enough to show that the norms $\|\cdot\|_{\mathcal{X}_t}$ and $\|\cdot, \cdot\|_{1/t}$ are equivalent on the space
$$\hat{\mathcal{X}}=\left\{\left(\phi(u), \frac{\phi(x)-\phi(y)}{|x-y|^s}\right):\phi\in C_c^\infty(\real)\right\}.$$ 

\noindent Proof of $W^{1/t,s}(\real, w)\subset\mathcal{X}_t$. Recall from the discussion preceding Theorem \ref{clit-11} that $$\|\phi\|_{\mathcal{X}_t}=\inf\{\|\tilde{\phi}\|_{F[\mathcal{X}]}: \tilde{\phi}\in F[\mathcal{X}], \quad \tilde{\phi}(t)=\phi\},$$
where $F[\mathcal{X}]$ was the family of analytic maps connecting $\mathcal{X}_0$ and $\mathcal{X}_1$, see before Theorem \ref{clit-11}.

Pick any $\phi\in \mathcal{X}=C_c^\infty(\real)$ such that $\|\phi\|_{W^{1/t,s}(\real, w)}=1$ and define  for $z\in \bar{S}$, where $S=\{z\in \mathbb{C} : 0<\Im z<1\}$  an analytic function 
\begin{align*}
    g(z)(u,x,y) &= 
        \begin{pmatrix}
          |\phi(u)|^{z/t}\exp(i\arg\phi(u)) \\           
          \left|\frac{\phi(x)-\phi(y)}{|x-y|^s}\right|^{z/t}\exp\left(i\arg\left(\frac{\phi(x)-\phi(y)}{|x-y|^s}\right)\right)
    \end{pmatrix}
  \end{align*}
Note that we  can identify $\mathcal{X}$ and $\hat{\mathcal{X}}$.
Clearly, $g(z)\in \hat{\mathcal{X}}$ and for all $v\in (0, 1)$
$$\|g(iv)\|_{\infty}=2,$$ and \begin{align*}
\|g(1+iv)\|_1&=\int_\real  ||\phi(u)|^{(1+iv)/t}| \, w(u)du+\int_\real\int_\real\left|\left|\frac{\phi(x)-\phi(y)}{|x-y|^s}\right|^{(1+iv)/t}\right|\frac{\,dx\,dy}{|x-y|}
\\&=\|\phi\|_{W^{ 1/t,s}(\real, w)}\\&=1.
\end{align*}
Thus, by the three lines theorem 
$\|g\|_{F[\mathcal{X}]}\leq 2=2\|\phi\|_{W^{ 1/t,s}(\real, w)}$ and, using the definition of $\|\cdot\|_{\mathcal{X}_t}$ we get  $\|\phi\|_{\mathcal{X}_t}\leq 2\|\phi\|_{W^{ 1/t,s}(\real, w)}$ for all $\phi\in\mathcal{X}=C_c^\infty(\real)$. This proves that  $W^{1/t, s}(\real,w)$ is a subset of $\mathcal{X}_t$, i.e. $W^{1/t, s}(\real, w)\subset\mathcal{X}_t$.

In oder to see the converse in conclusion, 
$W^{1/t,s}(\real, w)\supset\mathcal{X}_t$, we take any map 
\begin{align*}
    \psi(z)(u,x,y) &= 
        \begin{pmatrix}
         \psi_1(z)(u) \\           
          \psi_2(z)(x,y))
    \end{pmatrix} \textrm{ $\in \hat{\mathcal{X}}$ }
  \end{align*}
with $\psi(z)\in F[\mathcal{X}]$ and we fix $\phi\in C_c^\infty(\real)$ and $l(\cdot, \cdot)\in C^\infty_c (\real\times\real\setminus{\{x= y\}})$ such that $\|(\phi,l)\|_{1/(1-t)}=1$. Note that $\|\cdot\|_{\frac1{1-t}}$ and $\|\cdot\|_{\frac1{t}}$ are norms in duality as $1/\frac1{t}+1/\frac1{1-t}=1$. Define 
\begin{equation*}
\begin{aligned}
    h(z)(u,x,y) &= \begin{pmatrix}
         h_1(z)(u)\\           
         h_2(z)(x,y)
    \end{pmatrix}=
        \begin{pmatrix}
          |\phi(u)|^{(1-z)/(1-t)}\exp(i\arg\phi(u)) \\           
          |l(x,y)|^{(1-z)/(1-t)}\exp(i\arg l(x,y))
    \end{pmatrix}
     \quad \text{and} \\
    H(z) &= \int_\real\psi_1(z)(x) |\phi(x)|^{(1-z)/(1-t)}\exp(i\arg\phi(x))\,w(x)dx \\
    &\quad +\int_\real\int_\real\psi_2(z)(x,y) |l(x,y)|^{(1-z)/(1-t)}\exp(i\arg l(x,y)) \,\frac{\,dx\,dy}{|x-y|}.
\end{aligned}
\end{equation*}
Then we get \begin{align*}
H(t)&=\int_\real\psi_1(t)(x) \phi(x)\, w(x)dx+\int_\real\int_\real\psi_2(t)(x,y) l(x,y)\, \frac{\,dx\,dy}{|x-y|}.
\end{align*}
Since $H(z)$ is analytic on $S$, we can apply the three lines theorem to get 
\begin{align*}
&|H(t)|
\\&\leq\sup_{v\in\real}\left\{|H(iv) |, \quad  |H(1+iv)|\right\}\\
&\leq\sup_{v\in\real}\bigg\{\|\psi_1(iv)h_1(iv)\|_{L^1(w(x)dx)}+\|h_2(iv)\psi_2(iv)\|_{L^1(dm)},\\
&\qquad \|\psi_1(1+iv) h_1(1+iv)\|_{L^1(w(x)dx)}+\|h_2(1+iv)\psi_2(1+iv)\|_{ L^1(dm)}\bigg\}\\
&\leq \sup_{v\in\real}\bigg\{\|\psi_1(iv)\|_{L^\infty(w(x)dx)}\|h_1(iv)\|_{L^1(w(x)dx)}+\|\psi_2(iv)\|_{L^\infty(dm)}\|h_2(iv)\|_{L^1(dm)}, \\
&\qquad \|\psi_1(1+iv)\|_{L^1(w(x)dx)}\|h_1(1+iv)\|_{L^\infty(w(x)dx)}+\|\psi_2(1+iv)\|_{L^1(dm)}\|h_2(1+iv)\|_{L^\infty(dm)}\bigg\}\\
&\leq \sup_{v\in\real}\bigg\{\left(\|\psi_1(iv)\|_{L^\infty(w(x)dx)}+\|\psi_2(1+iv)\|_{L^\infty(dm)}\right) \left(\|h_1(iv)\|_{L^1(w(x)dx)}+\|h_2(iv)\|_{L^1(dm)}\right),\\
&\quad \left(\|\psi_1(1+iv)\|_{L^1(w(x)dx)}+\|\psi_2(1+iv)\|_{L^1(dm)}\right) \left(\|h_1(1+iv)\|_{L^\infty(w(x)dx)}+\|h_2(1+iv)\|_{L^\infty(dm)}\right)\bigg\}\\
&=\sup_{v\in\real}\bigg\{\|\psi_1(iv)\|_{L^\infty(w(x)dx)}+\|\psi_2(iv)\|_{L^\infty(dm)}, \|\psi_1(1+iv)\|_{L^1(w(x)dx)}+\|\psi_2(1+iv)\|_{L^1(dm)}\bigg\} 
\\&\qquad \times \sup_{v\in\real}\bigg\{\|h_1(iv)\|_{L^1(w(x)dx)}+\|h_2(iv)\|_{L^1(dm)},  \|h_1(1+iv)\|_{L^\infty(w(x)dx)}+\|h_2(1+iv)\|_{L^\infty(dm)}\bigg\}\\
&=\|\psi\|_{F[\mathcal{X}]}\|(\phi, l)\|_{\frac1{1-t}}\\
&=\|\phi\|_{\mathcal{X}_t}\|(\phi, l)\|_{\frac1{1-t}}
\end{align*}
since $\sup(ab, AB)\leq \sup(a,A)\sup(b, B)$ we can get the last two inequalities. Since $H(t)$ is the duality relation for the pair $(L^{1/t}, L^{1/{1-t}})$, we finally see 
$$\|\phi\|_{W^{1/t,s}(\real, w)}=\sup_{\|(\phi,l)\|_{1/{1-t}}=1}|H(t)|\leq \|\phi\|_{\mathcal{X}_t},$$
for all $\phi\in C_c^\infty(\real)$ finishing  this part of the proof. 
\\The identification of $\mathcal{Y}_t=W^{1/t,s-\alpha}(\real, w)$ goes along the same lines.
\end{proof}
In particular, we have the following classical Sobolev space results when we take $w=1$.
\begin{corollary} Let $ \phi\in W^{p,s}(\real)$,  $0<\alpha < s<1\leq p$, then  we have  \linebreak $\mad{}{+}{\alpha}\phi\in W^{p,s-\alpha}(\real)$.
\end{corollary}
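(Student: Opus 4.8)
The plan is to derive this corollary directly from Theorem \ref{mp-3-5} by specializing to the constant weight $w \equiv 1$. First I would check that $w \equiv 1$ belongs to the admissible weight class $W(\alpha)$: indeed $\|1\|_\infty = 1 < \infty$, and
\[
\sup_{x\in\real}\left|\int_1^\infty \frac{1}{y^{1+\alpha}}\,dy\right| = \frac{1}{\alpha} < \infty,
\]
so the defining conditions of $W(\alpha)$ hold trivially. Hence Theorem \ref{mp-3-5} applies with this choice of weight.

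Next I would observe that the weighted spaces collapse to the unweighted ones when $w=1$. By Definition \ref{sob-0}, taking $w = 1$ gives $W^{p,s}(\real,1) = W^{p,s}(\real)$ and $W^{p,s-\alpha}(\real,1) = W^{p,s-\alpha}(\real)$, with identical norms (the weight appears only through $L^p(\real,w)$, and $L^p(\real,1) = L^p(\real)$, while the Gagliardo seminorm term is weight-independent). So for any $\phi \in W^{p,s}(\real)$ we have $\phi \in W^{p,s}(\real,1)$, and the conclusion $\mad{}{+}{\alpha}\phi \in W^{p,s-\alpha}(\real,1) = W^{p,s-\alpha}(\real)$ follows immediately from Theorem \ref{mp-3-5}, under the stated hypotheses $0 < \alpha < s < 1 \le p$.

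Since this is a pure specialization, there is essentially no genuine obstacle; the only point worth a moment of care is the verification that the two-variable Gagliardo integral defining $\|\cdot\|_{W^{p,s}}$ is literally unchanged by setting $w=1$ — which is immediate from the formulas in Definition \ref{sob-0} — and that the interpolation identification of $\mathcal{X}_t$ and $\mathcal{Y}_t$ carried out in the proof of Theorem \ref{mp-3-5} specializes correctly (it does, since $L^{1/t}(\real, 1\cdot dx) = L^{1/t}(\real)$). Thus the proof is a one-line invocation of Theorem \ref{mp-3-5} after recording that $1 \in W(\alpha)$.

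\begin{proof}
The constant function $w \equiv 1$ belongs to $W(\alpha)$: clearly $\|1\|_\infty = 1 < \infty$ and $\sup_{x\in\real}\left|\int_1^\infty y^{-1-\alpha}\,dy\right| = 1/\alpha < \infty$. Moreover, by Definition \ref{sob-0} we have $W^{p,s}(\real, 1) = W^{p,s}(\real)$ and $W^{p,s-\alpha}(\real, 1) = W^{p,s-\alpha}(\real)$ with equal norms. Hence, given $\phi \in W^{p,s}(\real)$ with $0 < \alpha < s < 1 \le p$, Theorem \ref{mp-3-5} applied with $w = 1$ yields $\mad{}{+}{\alpha}\phi \in W^{p,s-\alpha}(\real, 1) = W^{p,s-\alpha}(\real)$.
\end{proof}
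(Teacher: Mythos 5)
Your proof is correct and matches the paper's intent exactly: the paper states this corollary as the immediate specialization of Theorem \ref{mp-3-5} to the constant weight $w\equiv 1$, which is precisely what you do, including the (easy but worthwhile) check that $1\in W(\alpha)$ and that the weighted spaces reduce to the classical ones.
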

Let $\Omega$ be a proper open subset of $\real^d$. $C_c^\infty(\Omega)$ is not dense in $W^{p,k}(\Omega)$. Instead, its closure is the space of functions $W^{p,k}_0(\Omega)$ that vanish on the boundary $\partial \Omega$. For any open set, the space $C^\infty(\Omega)\cap W^{p,k}(\Omega)$ is dense in $W^{p,k}(\Omega)$  cf. Meyers and Serrin \cite{1964_Meyers}. 
\noindent  Denote by  $W^{p,\beta}_0(0, T):=\overline{C^\infty_c(0,T)}^{\|\cdot\|_{W^{p,\beta}}} =\{\phi \in W^{p,\beta}(0, T): \phi(0\plus)=\phi(T\minus)=0\}$.
\begin{remark} If $\phi\in W^{\infty,\beta}(\Omega)$, there exists a representative $\bar\phi$ that is H\"older continuous, i.e. $\bar\phi\in C^\beta(\Omega)$, see \cite[P.13]{2023_Leoni}.
\end{remark}
We recall that the denseness of smooth compactly supported functions in fractional Sobolev space $W^{p,\beta}$
is ensured only in some cases cf. \cite[P.23]{2020_Carbotti} and Triebel \cite[P.209]{1985_Triebel}.
\begin{theorem}\label{sob-tes} Let  $\beta\in(0,1)$ and $1\leq p < \infty, \, \beta p< 1.$ Then the following result $\overline{C^\infty_c(0,T)}^{\|\cdot\|_{W^{p,\beta}}}=W^{p,\beta}(0, T)$ holds; that is , $C^\infty_c(0, T)$ is dense in $W^{p,\beta}(0,T)=W_0 ^{p,\beta}(0, T).$
\end{theorem}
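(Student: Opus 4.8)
The plan is to reduce to approximating functions that are smooth up to the boundary and then to cut them off near the two endpoints; the whole role of the hypothesis $\beta p<1$ will be that such a cutoff costs only a factor $\varepsilon^{(1-\beta p)/p}$ in the Gagliardo seminorm, which tends to $0$.

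\textbf{Step 1 (reduction to smooth functions).} Since $(0,T)$ is an extension domain, every $\phi\in W^{p,\beta}(0,T)$ admits an extension $\tilde\phi\in W^{p,\beta}(\real)$; mollifying $\tilde\phi$ and restricting to $(0,T)$ yields a sequence in $C^\infty([0,T])\cap W^{p,\beta}(0,T)$ converging to $\phi$ in $W^{p,\beta}(0,T)$ (alternatively one invokes Meyers--Serrin \cite{1964_Meyers} together with Leoni \cite{2023_Leoni}). Hence it suffices to approximate, in $W^{p,\beta}(0,T)$, a fixed $\psi\in C^\infty([0,T])$, which is in particular bounded and Lipschitz, by functions in $C_c^\infty(0,T)$.

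\textbf{Step 2 (the cutoff and the easy terms).} Fix $\zeta\in C^\infty(\real)$ with $0\le\zeta\le1$, $\zeta\equiv0$ on $(-\infty,1]$ and $\zeta\equiv1$ on $[2,\infty)$, and for small $\varepsilon>0$ set $\zeta_\varepsilon(x):=\zeta(x/\varepsilon)\,\zeta((T-x)/\varepsilon)$, so that $\zeta_\varepsilon\in C_c^\infty(0,T)$, $\zeta_\varepsilon\equiv1$ on $[2\varepsilon,T-2\varepsilon]$, $|\zeta_\varepsilon'|\le C/\varepsilon$, and $1-\zeta_\varepsilon$ is supported in $I_\varepsilon:=(0,2\varepsilon)\cup(T-2\varepsilon,T)$. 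Put $\psi_\varepsilon:=\zeta_\varepsilon\psi\in C_c^\infty(0,T)$. Then $\|\psi_\varepsilon-\psi\|_{L^p}^p\le\int_{I_\varepsilon}|\psi|^p\to0$. For the seminorm use
\begin{align*}
(1-\zeta_\varepsilon)(x)\psi(x)-(1-\zeta_\varepsilon)(y)\psi(y)
&=(1-\zeta_\varepsilon)(x)\bigl(\psi(x)-\psi(y)\bigr)\\
&\quad+\psi(y)\bigl(\zeta_\varepsilon(y)-\zeta_\varepsilon(x)\bigr),
\end{align*}
so that $[\psi_\varepsilon-\psi]_{W^{p,\beta}(0,T)}^p\le 2^{p-1}(A_\varepsilon+B_\varepsilon)$, where $A_\varepsilon=\int_0^T\!\int_0^T(1-\zeta_\varepsilon(x))^p\,|\psi(x)-\psi(y)|^p\,|x-y|^{-1-\beta p}\,dx\,dy$ and $B_\varepsilon\le\|\psi\|_\infty^p\,[\zeta_\varepsilon]_{W^{p,\beta}(0,T)}^p$. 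The term $A_\varepsilon\to0$ by dominated convergence, its integrand being dominated by the integrable Gagliardo integrand of $\psi$ while $(1-\zeta_\varepsilon(x))^p\to0$ a.e.

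\textbf{Step 3 (the crux: $[\zeta_\varepsilon]_{W^{p,\beta}(0,T)}\to0$).} By symmetry it is enough to treat the endpoint $0$. One splits $(0,T)^2$ into the corner $(0,3\varepsilon)^2$, where the substitution $x=\varepsilon u$, $y=\varepsilon v$ gives
\begin{align*}
\int_0^{3\varepsilon}\!\!\int_0^{3\varepsilon}\frac{|\zeta_\varepsilon(x)-\zeta_\varepsilon(y)|^p}{|x-y|^{1+\beta p}}\,dx\,dy
=\varepsilon^{\,1-\beta p}\int_0^{3}\!\!\int_0^{3}\frac{|\zeta(u)-\zeta(v)|^p}{|u-v|^{1+\beta p}}\,du\,dv,
\end{align*}
the last integral being finite because $\zeta$ is Lipschitz and $0<\beta p<1$ (so $|u-v|^{p-1-\beta p}$ is integrable near the diagonal and $|u-v|^{-1-\beta p}$ away from it); and the off-corner region $\{x\in(0,2\varepsilon),\ y\in(3\varepsilon,T)\}$ together with its transpose, where $|x-y|\ge\varepsilon$ and hence
\begin{align*}
\int_0^{2\varepsilon}\!\!\int_{3\varepsilon}^{T}\frac{|1-\zeta_\varepsilon(x)|^p}{|x-y|^{1+\beta p}}\,dy\,dx
\le\int_0^{2\varepsilon}\!\!\int_{3\varepsilon}^{\infty}\frac{dy}{(y-x)^{1+\beta p}}\,dx
=\frac{1}{\beta p}\int_0^{2\varepsilon}(3\varepsilon-x)^{-\beta p}\,dx\le C\varepsilon^{\,1-\beta p}.
\end{align*}
On the remaining part of $(0,T)^2$ both points lie in $[2\varepsilon,T-2\varepsilon]$ and the integrand vanishes. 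Adding the contributions of the two endpoints gives $[\zeta_\varepsilon]_{W^{p,\beta}(0,T)}^p\le C\varepsilon^{\,1-\beta p}\to0$, hence $B_\varepsilon\to0$ and $\psi_\varepsilon\to\psi$ in $W^{p,\beta}(0,T)$. Combined with Step 1 this shows $C_c^\infty(0,T)$ is dense in $W^{p,\beta}(0,T)$, and since $W^{p,\beta}_0(0,T):=\overline{C_c^\infty(0,T)}^{\|\cdot\|_{W^{p,\beta}}}$ by definition, we conclude $W^{p,\beta}_0(0,T)=W^{p,\beta}(0,T)$.

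The main obstacle is exactly the estimate in Step 3: the cutoff error in the Gagliardo seminorm scales like $\varepsilon^{(1-\beta p)/p}$, which goes to zero only in the subcritical range $\beta p<1$ (for $\beta p>1$ the estimate, and the density, genuinely fail, since $W^{p,\beta}$-functions then possess a nonzero boundary trace). A secondary point that should be spelled out or cited is the density of $C^\infty([0,T])$ in $W^{p,\beta}(0,T)$ used in Step 1, which rests on the extension property of the interval. An alternative that bypasses Step 1 is to work directly with $\phi\in W^{p,\beta}(0,T)$ and use the fractional Hardy inequality $\int_0^T \operatorname{dist}(x,\{0,T\})^{-\beta p}\,|\phi(x)|^p\,dx\le C\|\phi\|_{W^{p,\beta}(0,T)}^p$ (valid for $\beta p<1$) to control the boundary-layer term $\varepsilon^{-\beta p}\int_{I_\varepsilon}|\phi|^p$; the cutoff estimates are then essentially the same.
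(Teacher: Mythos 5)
The paper does not actually prove this statement: it is recorded as a known density result and referred to Carbotti et al.\ \cite[P.23]{2020_Carbotti} and Triebel \cite[P.209]{1985_Triebel}. Your argument is therefore a genuinely self-contained alternative, and it is correct: the reduction to $\psi\in C^\infty([0,T])$ via extension and mollification is legitimate for a bounded interval, the splitting of the seminorm of $(1-\zeta_\varepsilon)\psi$ into $A_\varepsilon$ and $B_\varepsilon$ is the right decomposition, the covering of $(0,T)^2$ by the two corners, the off-corner strips and the region where $\zeta_\varepsilon\equiv1$ is exhaustive, and the scaling $[\zeta_\varepsilon]_{W^{p,\beta}}^p\le C\varepsilon^{1-\beta p}$ is exactly where $\beta p<1$ enters, matching the known failure of density for $\beta p>1$. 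What your proof buys over the paper's citation is transparency about the mechanism (the cutoff cost $\varepsilon^{(1-\beta p)/p}$) and independence from the literature; what it costs is length, and two small points you should tighten. First, Meyers--Serrin as cited in the paper concerns integer-order spaces $W^{p,k}$, so for the fractional case you should rely on the extension-plus-mollification route (or cite Leoni's fractional version) rather than invoking \cite{1964_Meyers} directly. Second, in the off-corner region $x\in(0,2\varepsilon)$, $y\in(3\varepsilon,T)$ the numerator is $|\zeta_\varepsilon(x)-\zeta_\varepsilon(y)|^p$, which equals $|1-\zeta_\varepsilon(x)|^p$ only for $y\le T-2\varepsilon$; since you in any case bound it by $1$ before integrating, this is cosmetic, but the displayed identity should be written as an inequality. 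Neither point affects the validity of the proof.
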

As we know the results from Lemma \ref{mar-rl-frac}, Riemann-Liouville fractional derivative can be viewed as the Marchaud fractional derivative when we use the killing type extension. The next definition and theorem are taken from Lenoi \cite[P.47]{2023_Leoni} and \cite[P.32]{2013_Schmeisser}.

\begin{definition}\label{sob-ext-4-1}
Let $(0,\infty)\subseteq\real$ be an open interval, $1\leq p<\infty$, and $0<s<1$. We define the space $W_0^{p,s}(0,\infty)$ as the space of all functions $u\in W^{p,s}(0,\infty)$ such that the function $ext_\real^0 u:\real\to \real$, given by 
\begin{align*}
    ext_\real^0 u=\left\{\begin{aligned}
        &u(x), \quad x\in (0,\infty),\\
        &0, \quad \quad ~x\in \real \setminus (0,\infty),
    \end{aligned}
    \right.
\end{align*}
belongs to $W^{p,s}(\real)$.
\end{definition}


\begin{corollary}\label{rl-3-8}  Let $\phi\in W_0^{p,s}(0, T), \quad 0<\alpha <s<1$. Assume that $ext_\real^0$ , $res_{(0, T)}$ are the extension operator and restriction operator as in Definition \ref{sob-ext-4-1}. Then we have $$\rld{0}{\cdot}{\alpha}\phi=res_{(0, T)} \circ \mad{}{+}{\alpha} \circ (ext_\real^0 \phi)\in W^{p,s-\alpha}(0, T) \, i.e. \rld{0}{\cdot}{\alpha}: W_0^{p,s}(0, T) \to W^{p,s-\alpha}(0, T) .$$
\end{corollary}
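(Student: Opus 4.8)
The plan is to deduce Corollary \ref{rl-3-8} from Theorem \ref{mp-3-5} by transporting the problem on $(0,T)$ to a problem on all of $\real$ via the zero-extension operator. First I would fix $\phi \in W_0^{p,s}(0,T)$ with $0<\alpha<s<1$. By Definition \ref{sob-ext-4-1}, the function $\phi^0 := ext_\real^0\phi$ lies in $W^{p,s}(\real)$. Then I would invoke Theorem \ref{mp-3-5} (with weight $w\equiv1$, i.e. its Corollary) to conclude that $\mad{}{+}{\alpha}\phi^0 \in W^{p,s-\alpha}(\real)$, hence in particular its restriction to $(0,T)$ lies in $W^{p,s-\alpha}(0,T)$ (restriction to an open subinterval is bounded on these Gagliardo–Sobolev spaces, since both the $L^p$ term and the double integral only decrease). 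This already gives the membership claim; it remains to identify $res_{(0,T)}\circ\mad{}{+}{\alpha}\circ ext_\real^0\,\phi$ with $\rld{0}{\cdot}{\alpha}\phi$.

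The key identification step is Lemma \ref{mar-rl-frac}, which asserts precisely that $\rld{0}{x}{\alpha}\psi = \mad{}{+}{\alpha}\psi^0(x)$ for $x>0$ and sufficiently regular $\psi$ on $[0,\infty)$. So for $x\in(0,T)$ one has $\mad{}{+}{\alpha}\phi^0(x) = \rld{0}{x}{\alpha}\phi$, which is exactly the asserted equality $\rld{0}{\cdot}{\alpha}\phi = res_{(0,T)}\circ\mad{}{+}{\alpha}\circ(ext_\real^0\phi)$. Strictly speaking Lemma \ref{mar-rl-frac} was proved for nice $\phi$ (differentiable, decaying), so I would first establish the conclusion of the corollary for $\phi\in C_c^\infty(0,T)$, where the pointwise identity and all manipulations are unproblematic and $\phi^0\in C_c^\infty(\real)\subset W^{p,s}(\real)$, and then pass to general $\phi\in W_0^{p,s}(0,T)$ by density.

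For the density argument I would use that, by Theorem \ref{sob-tes}, when $ps<1$ the space $C_c^\infty(0,T)$ is dense in $W^{p,s}(0,T)=W_0^{p,s}(0,T)$; more generally $W_0^{p,s}(0,T)$ is by definition (via \ref{sob-ext-4-1}) the closure in which the zero-extension stays in $W^{p,s}(\real)$, and the map $\phi\mapsto\phi^0$ is an isometry onto a closed subspace of $W^{p,s}(\real)$ on which $C_c^\infty(0,T)$ is dense. Take $\phi_n\in C_c^\infty(0,T)$ with $\phi_n\to\phi$ in $W^{p,s}(0,T)$ and $\phi_n^0\to\phi^0$ in $W^{p,s}(\real)$; then by the boundedness of $\mad{}{+}{\alpha}:W^{p,s}(\real)\to W^{p,s-\alpha}(\real)$ (Theorem \ref{mp-3-5}) we get $\mad{}{+}{\alpha}\phi_n^0\to\mad{}{+}{\alpha}\phi^0$ in $W^{p,s-\alpha}(\real)$, and restricting to $(0,T)$ preserves convergence; simultaneously $\rld{0}{\cdot}{\alpha}\phi_n = \frac{d}{dx}\rli{0}{\cdot}{1-\alpha}\phi_n$ can be shown to converge to $\rld{0}{\cdot}{\alpha}\phi$ in $W^{p,s-\alpha}(0,T)$ (or one simply defines $\rld{0}{\cdot}{\alpha}\phi$ on $W_0^{p,s}$ as the unique continuous extension, which must then coincide with the restriction of the Marchaud derivative). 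Passing to the limit in the identity valid for each $\phi_n$ yields the corollary.

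The main obstacle is the density/limiting step rather than the algebra: one must make sure that $C_c^\infty(0,T)$ really is dense in $W_0^{p,s}(0,T)$ in the relevant norm for \emph{all} $0<\alpha<s<1$ and $1\le p<\infty$ — Theorem \ref{sob-tes} only covers $ps<1$, so for $ps\ge1$ I would lean directly on Definition \ref{sob-ext-4-1} together with the standard fact (Meyers–Serrin type, or via mollification and cut-off away from the endpoints $0,T$) that smooth compactly supported functions are dense in the subspace $\{u\in W^{p,s}(0,T): ext^0_\real u\in W^{p,s}(\real)\}$. One must also check that the restriction operator $res_{(0,T)}:W^{p,s-\alpha}(\real)\to W^{p,s-\alpha}(0,T)$ is genuinely bounded, which is immediate from the definition of the Gagliardo norm since integrating over the smaller domain only reduces both terms. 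With those two ingredients in place, the corollary follows formally from Theorem \ref{mp-3-5} and Lemma \ref{mar-rl-frac}.
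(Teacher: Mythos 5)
Your proposal is correct and follows essentially the same route the paper intends for this corollary: zero-extend via Definition \ref{sob-ext-4-1}, apply the unweighted case of Theorem \ref{mp-3-5} to get $\mad{}{+}{\alpha}\phi^0\in W^{p,s-\alpha}(\real)$, restrict to $(0,T)$, and identify the result with $\rld{0}{\cdot}{\alpha}\phi$ through Lemma \ref{mar-rl-frac}. The extra care you take with the density/limiting step and the boundedness of restriction is a reasonable elaboration of the same argument rather than a different approach.
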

As we know the results from Lemma \ref{mar-rl-frac}, the Caputo fractional derivative can be viewed as the Marchaud fractional derivative when we use the sticky type extension. We have the following results  that the even extension operator maps fractional Sobolev space $W^{p,s}((0,\infty))$ to $W^{p,s}(\real)$.
\begin{theorem}\label{sob-ext-4-2} Let $(0,\infty)\subseteq\real$ be an open interval and 
$1\leq p<\infty,\, ps\geq1$,  and define 
$ext_\real^e u:\real\to \real$, given by 
\begin{align*}
    ext_\real^e u=\left\{\begin{aligned}
        &u(x), \quad x\in (0,\infty),\\
        &u(-x), \quad \quad ~x\in \real \setminus (0,\infty),
    \end{aligned}
    \right.
\end{align*}
 we have $ext_\real^e: W^{p,s}((0,\infty))\to W^{p,s}(\real)$, which is a continuous linear map. 
\end{theorem}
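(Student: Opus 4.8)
The plan is to prove the norm estimate $\|ext_\real^e u\|_{W^{p,s}(\real)}\le C\,\|u\|_{W^{p,s}((0,\infty))}$ for every $u\in W^{p,s}((0,\infty))$, working directly from Definition \ref{sob-0}. Such an estimate simultaneously shows that $ext_\real^e$ maps into $W^{p,s}(\real)$ and that it is continuous, while linearity is immediate from the pointwise identity $ext_\real^e(au+bw)=a\,ext_\real^e u+b\,ext_\real^e w$. Throughout write $v:=ext_\real^e u$, so that $v(x)=u(|x|)$ for almost every $x\in\real$, and abbreviate the Gagliardo part of the norm by $[\phi]_{W^{p,s}(\Omega)}^p:=\int_\Omega\int_\Omega |\phi(x)-\phi(y)|^p\,|x-y|^{-ps-1}\,dx\,dy$, which is finite for $\phi=u$ by hypothesis.

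The $L^p$ part is handled at once: since $x\mapsto -x$ preserves Lebesgue measure, $\|v\|_{L^p(\real)}^p=\int_{-\infty}^0|u(-x)|^p\,dx+\int_0^\infty|u(x)|^p\,dx=2\|u\|_{L^p((0,\infty))}^p$. For the seminorm I would split $\real\times\real$, up to the null set $\{xy=0\}$, into its four open quadrants. On $\{x>0,\,y>0\}$ the integrand of $[v]_{W^{p,s}(\real)}^p$ is literally that of $[u]_{W^{p,s}((0,\infty))}^p$; on $\{x<0,\,y<0\}$ the substitution $(x,y)\mapsto(-x,-y)$ preserves $dx\,dy$ and $|x-y|$ and turns $v$ into $u$, so this quadrant again contributes $[u]_{W^{p,s}((0,\infty))}^p$. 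On the mixed quadrant $\{x>0,\,y<0\}$, writing $y=-\eta$ with $\eta>0$ gives $v(x)-v(y)=u(x)-u(\eta)$ and $|x-y|=x+\eta$, so that region contributes $J:=\int_0^\infty\int_0^\infty |u(x)-u(\eta)|^p\,(x+\eta)^{-ps-1}\,dx\,d\eta$, and the quadrant $\{x<0,\,y>0\}$ contributes the same $J$ by the symmetry $(x,y)\mapsto(y,x)$. Hence $[v]_{W^{p,s}(\real)}^p=2[u]_{W^{p,s}((0,\infty))}^p+2J$.

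It remains to bound $J$, which is the only step needing a small observation. For $x,\eta\ge 0$ one has $x+\eta\ge|x-\eta|$, and since $t\mapsto t^{-ps-1}$ is decreasing on $(0,\infty)$ this gives $(x+\eta)^{-ps-1}\le|x-\eta|^{-ps-1}$ pointwise; integrating, $J\le[u]_{W^{p,s}((0,\infty))}^p$. Putting the pieces together,
\[
\|v\|_{W^{p,s}(\real)}^p=\|v\|_{L^p(\real)}^p+[v]_{W^{p,s}(\real)}^p\le 2\|u\|_{L^p((0,\infty))}^p+4[u]_{W^{p,s}((0,\infty))}^p\le 4\,\|u\|_{W^{p,s}((0,\infty))}^p,
\]
so the claim holds with $C=4^{1/p}$. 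I do not expect a serious obstacle: the only things that require attention are the bookkeeping of the reflection substitution on the mixed quadrants and the remark that the apparent singularity of the integrand of $J$ along the diagonal $\{x=\eta\}$ is automatically absorbed by $[u]_{W^{p,s}((0,\infty))}$ because the ``$+$'' in $x+\eta$ dominates the ``$-$''. In fact the argument only uses $0<s<1$ and $1\le p<\infty$; the hypothesis $ps\ge1$ is relevant because it is precisely the regime in which the even extension, rather than the killing extension of Definition \ref{sob-ext-4-1} (which would force a vanishing trace at $0$), is the natural choice.
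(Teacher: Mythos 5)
Your proof is correct, and it takes a genuinely different route from the paper. You prove the boundedness estimate directly from the definition: splitting the Gagliardo seminorm of $v=ext_\real^e u$ over the four quadrants of $\real\times\real$, identifying the two pure quadrants with $[u]_{W^{p,s}((0,\infty))}^p$ via the measure-preserving reflection, and controlling the two mixed quadrants by the pointwise kernel comparison $(x+\eta)^{-ps-1}\le|x-\eta|^{-ps-1}$ for $x,\eta>0$. This yields $\|ext_\real^e u\|_{W^{p,s}(\real)}\le 4^{1/p}\|u\|_{W^{p,s}((0,\infty))}$, which gives membership in $W^{p,s}(\real)$ and continuity at once. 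The paper instead invokes a gluing/characterization theorem from Leoni (Theorem 2.18) for when a function defined piecewise on the two half-lines belongs to $W^{p,s}(\real)$, and merely verifies the compatibility condition: for $ps=1$ the integral $\int_0^\infty|u(x)-u(-(-x))|^p\,dx/x$ vanishes identically for the even extension, and for $ps>1$ the H\"older-continuous representatives match at $0$. What each approach buys: the paper's argument is very short once the reference is granted, and it explains why the hypothesis $ps\ge1$ appears (it is the regime in which the cited characterization imposes a trace or integral condition, trivially satisfied by the even reflection); your argument is self-contained, produces an explicit operator norm, and — as you correctly note — shows the conclusion for all $0<s<1$ and $1\le p<\infty$ without the restriction $ps\ge1$, so it is in fact slightly stronger than the stated theorem. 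Both proofs are sound; yours replaces the external citation with elementary bookkeeping.
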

\begin{proof}
In the view of Theorem 2.18 in \cite[P. 62]{2023_Leoni}, it is trivial to show $ext_\real^e u\in W^{p,s}(\real)$.
When $ps=1$,  $$\int_0^\infty\left|u(x)-u[-(-x)]\right|^p \,\frac{dx}{x}=0<\infty.$$
When $ps>1$, $\bar{u}(0)=\bar{u}(-0)$, where $\bar{u}$ is the H\"{o}lder continuous
representative of $u$.
\end{proof}
\begin{corollary}\label{rl-3-9}  Let $\phi\in W^{p,s}(0, \infty), \quad 0<\alpha <s<1< p$ and $ps\geq1$. Assume that $ext_\real^e$ is the extension operator as in Theorem \ref{sob-ext-4-2}. 
 Then we have $$\ed{0}{x}{\alpha}\phi= \mad{}{+}{\alpha} \circ (ext_\real^e \phi)\in W^{p,s-\alpha}(\real), i.e. \ed{0}{x}{\alpha}:  W^{p,s}((0,\infty))\to W^{p,s-\alpha}(\real).$$ 
\end{corollary}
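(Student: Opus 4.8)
The plan is to factor the even-extension censored-type derivative as $\ed{0}{x}{\alpha} = \mad{}{+}{\alpha}\circ ext_\real^e$ and then chain the two mapping results already in hand. Recall that the identity $\ed{0}{x}{\alpha}\phi = \mad{}{+}{\alpha}(ext_\real^e\phi)$ is not something to be proved here: it is the very definition of the even extension of the Marchaud derivative (the displayed formula just before Lemma \ref{mar-rl-frac}), consistent with the third line of Corollary \ref{cap-rld-con}. So the corollary reduces to the two statements (i) $ext_\real^e$ maps $W^{p,s}((0,\infty))$ into $W^{p,s}(\real)$, and (ii) $\mad{}{+}{\alpha}$ maps $W^{p,s}(\real)$ into $W^{p,s-\alpha}(\real)$; both are available.

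First I would invoke Theorem \ref{sob-ext-4-2}: under the hypotheses $1<p<\infty$ and $ps\ge1$ it gives a continuous linear map $ext_\real^e:W^{p,s}((0,\infty))\to W^{p,s}(\real)$, so $ext_\real^e\phi\in W^{p,s}(\real)$ with $\|ext_\real^e\phi\|_{W^{p,s}(\real)}\le C\|\phi\|_{W^{p,s}((0,\infty))}$. This is the only place the standing hypothesis $ps\ge1$ enters. Next I would apply the $w\equiv1$ specialization of Theorem \ref{mp-3-5} (stated as the corollary immediately following it): for this I first note that the constant weight lies in $W(\alpha)$ because $\sup_x\int_1^\infty y^{-1-\alpha}\,dy = 1/\alpha<\infty$, and that $C_c^\infty(\real)$ is dense in $W^{p,s}(\real)$ (the whole line has no boundary, so $W_0^{p,s}(\real)=W^{p,s}(\real)$), which is exactly what is needed for the Calder\'on--Lions interpolation argument behind Theorem \ref{mp-3-5} to cover the element $ext_\real^e\phi$. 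With $0<\alpha<s<1\le p$ this yields $\mad{}{+}{\alpha}(ext_\real^e\phi)\in W^{p,s-\alpha}(\real)$ together with $\|\mad{}{+}{\alpha}(ext_\real^e\phi)\|_{W^{p,s-\alpha}(\real)}\le C\|ext_\real^e\phi\|_{W^{p,s}(\real)}$.

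Composing the two bounds gives $\ed{0}{x}{\alpha}\phi = \mad{}{+}{\alpha}(ext_\real^e\phi)\in W^{p,s-\alpha}(\real)$ with $\|\ed{0}{x}{\alpha}\phi\|_{W^{p,s-\alpha}(\real)}\le C\|\phi\|_{W^{p,s}((0,\infty))}$, i.e. $\ed{0}{x}{\alpha}:W^{p,s}((0,\infty))\to W^{p,s-\alpha}(\real)$ continuously, which is the assertion. Since all of the genuine analytic work --- the $L^1$ and H\"older endpoint estimates for $\mad{}{+}{\alpha}$, the identification of the interpolation spaces as (weighted) fractional Sobolev spaces, and the $W^{p,s}$-boundedness of the even reflection --- is carried out in Corollaries \ref{rie-lil-7} and \ref{map-frc-2.2.9}, Theorem \ref{mp-3-5}, and Theorem \ref{sob-ext-4-2}, I do not expect a serious obstacle here: the only point demanding attention is matching the hypotheses of those results with those of the corollary (in particular confirming that $ps\ge1$ and $1<p$ suffice, and that $\ed{0}{x}{\alpha}\phi$ as given by Corollary \ref{cap-rld-con} coincides with the operator obtained by the density/interpolation procedure, which one checks first on $\phi\in C_c^\infty(\real)$ and then extends by continuity).
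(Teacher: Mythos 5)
Your proposal is correct and follows exactly the route the paper intends: the corollary is a direct composition of Theorem \ref{sob-ext-4-2} (even extension maps $W^{p,s}((0,\infty))$ into $W^{p,s}(\real)$ under $ps\ge 1$) with the $w\equiv 1$ case of Theorem \ref{mp-3-5} (Marchaud derivative maps $W^{p,s}(\real)$ into $W^{p,s-\alpha}(\real)$), using the definitional identity $\ed{0}{x}{\alpha}\phi=\mad{}{+}{\alpha}(ext_\real^e\phi)$. Your additional checks (constant weight lies in $W(\alpha)$, density of $C_c^\infty(\real)$ behind the interpolation argument) are sound and only make explicit what the paper leaves implicit.
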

\subsection*{Declaration of interests}
{The authors report no conflict of interest.  No datasets were used.}

\bibliographystyle{plain}
\bibliography{Thesis.bib}

\begin{thebibliography}{10}

\bibitem{2001_Baeumer}
Boris Baeumer and Mark~M. Meerschaert.
\newblock Stochastic solutions for fractional cauchy problems.
\newblock {\em Fractional Calculus and Applied Analysis}, 4(4):481--500, 2001.

\bibitem{1988_Bennett}
Colin Bennett and Robert~C. Sharpley.
\newblock {\em Interpolation of operators}.
\newblock Academic press, 1988.

\bibitem{2017_Bergounioux}
Maïtine Bergounioux, Antonio Leaci, Giacomo Nardi, and Franco Tomarelli.
\newblock Fractional sobolev spaces and functions of bounded variation of one
  variable.
\newblock {\em Fractional Calculus and Applied Analysis}, 20(4):936--962, 2017.

\bibitem{2020_Carbotti}
Alessandro Carbotti and Giovanni~E Comi.
\newblock A note on {R}iemann-{L}iouville fractional {S}obolev spaces.
\newblock {\em Communications on Pure and Applied Analysis}, 20(1):17--54,
  2020.

\bibitem{2010_Diethelm}
Kai Diethelm.
\newblock {\em The analysis of fractional differential equations: An
  application-oriented exposition using differential operators of Caputo type}.
\newblock Springer, 2010.

\bibitem{2021_Du}
Qiang Du, Lorenzo Toniazzi, and Zirui Xu.
\newblock Censored stable subordinators and fractional derivatives.
\newblock {\em Fractional Calculus and Applied Analysis}, 24(4):1035--1068,
  2021.

\bibitem{2004_Eidelman}
Samuil~D. Eidelman and Anatoly~N. Kochubei.
\newblock Cauchy problem for fractional diffusion equations.
\newblock {\em Journal of Differential Equations}, 199(2):211--255, 2004.

\bibitem{2015_HernandezHernandez}
M.~E. Hernández-Hernández and V.~N. Kolokoltsov.
\newblock On the probabilistic approach to the solution of generalized
  fractional differential equations of {C}aputo and {R}iemann-{L}iouville type.
\newblock {\em Journal of Fractional Calculus and Applications}, 7(1):147--175,
  2016.

\bibitem{2012_Kato}
Tosio Kato.
\newblock {\em Perturbation theory for linear operators}, volume 132.
\newblock Springer Science \& Business Media, 2012.

\bibitem{2006_Kilbas}
Anatoliĭ~Aleksandrovich Kilbas, Hari~M. Srivastava, and Juan~J. Trujillo.
\newblock {\em Theory and applications of fractional differential equations}.
\newblock Elsevier, 2006.

\bibitem{2008_Klages}
Rainer Klages, Günter Radons, and Igor~Mihajlovič Sokolov.
\newblock {\em Anomalous transport}.
\newblock Wiley Online Library, 2008.

\bibitem{2019_Kochubei}
Anatoly Kochubei, Yuri Luchko, Vasily~E. Tarasov, and Ivo Petráš.
\newblock {\em Handbook of fractional calculus with applications Volume 1,
  Basic Theory}.
\newblock De Gruyter, 2019.

\bibitem{2019_Kolokoltsov}
Vassili~N. Kolokoltsov.
\newblock The probabilistic point of view on the generalized fractional partial
  differential equations.
\newblock {\em Fractional Calculus and Applied Analysis}, 22(3):543--600, 2019.

\bibitem{2023_Leoni}
Giovanni Leoni.
\newblock {\em A first course in fractional Sobolev spaces}.
\newblock American Mathematical Society, 2023.

\bibitem{2023_Li}
Cailing Li.
\newblock {\em Fractional Time Derivatives and Stochastic Processes}.
\newblock PhD thesis, 2024.
\newblock Available at \url{
  https://nbn-resolving.de/urn:nbn:de:bsz:14-qucosa2-901780}.

\bibitem{2009_Meerschaert}
Mark~M. Meerschaert, Erkan Nane, and Palaniappan Vellaisamy.
\newblock Fractional {C}auchy problems on bounded domains.
\newblock {\em The Annals of Probability}, 37(3):979--1007, 2009.

\bibitem{1964_Meyers}
Norman Meyers and James Serrin.
\newblock H = {W}.
\newblock {\em Proceedings of the National Academy of Sciences of the United
  States of America}, 51(6):1055--6, 1964.

\bibitem{1998_Podlubny}
Igor Podlubny.
\newblock {\em Fractional differential equations: an introduction to fractional
  derivatives, fractional differential equations, to methods of their solution
  and some of their applications}.
\newblock Elsevier, 1998.

\bibitem{2016_Rafeiro}
Humberto Rafeiro and Stefan Samko.
\newblock Fractional integrals and derivatives: mapping properties.
\newblock {\em Fractional Calculus and Applied Analysis}, 19(3):580--607, 2016.

\bibitem{1975_Reed}
Michael Reed and Barry Simon.
\newblock {\em Methods of Modern Mathematical Physics: Fourier Analysis,
  Self-Adjointness}, volume~2.
\newblock Elsevier, 1975.

\bibitem{1980_Reed}
Michael Reed and Barry Simon.
\newblock {\em Functional analysis. Revised and Enlarged Edition}, volume~1.
\newblock 1980.

\bibitem{1993_Samko}
Stefan~G. Samko, Anatoly~A. Kilbas, and Oleg~Igorevich Marichev.
\newblock {\em Fractional integrals and derivatives: theory and applications}.
\newblock Gordon and Breach, 1993.

\bibitem{2013_Schmeisser}
Hans-Jürgen Schmeisser and Hans Triebel.
\newblock {\em Function spaces, differential operators and nonlinear analysis}.
\newblock Springer-Verlag, 2013.

\bibitem{1985_Triebel}
Hans Triebel.
\newblock Theory of function spaces.
\newblock {\em Birkh\"auser}, 1983.

\bibitem{1994_Triebel}
Hans Triebel.
\newblock Theory of function spaces {II}.
\newblock {\em Birkh\"auser}, 1992.

\end{thebibliography}
\end{document}